\newtheorem{theorem}{Theorem} 
\newtheorem*{lrc}{Lonely Runner Conjecture} 
\newtheorem{corollary}[theorem]{Corollary}
\newtheorem{proposition}[theorem]{Proposition}
\newtheorem{remark}[theorem]{Remark}
\newtheorem{question}[theorem]{Question}
\renewcommand\emptyset{\varnothing}
\renewcommand\phi{\varphi}
\newcommand{\lext}[1]{\mathbf{#1}}
\newcommand\conv{\operatorname{conv}} 
\newcommand\inter{\operatorname{int}} 
\newcommand\ca{\operatorname{ca}} 
\newcommand\rec{\operatorname{rec}}
\DeclareMathOperator*{\vol}{vol}
\newcommand\run{\operatorname{Run}} 
\newcommand\rab{\operatorname{Rab}} 
\newcommand\gap{\operatorname{gap}}
\newcommand\ZZ{\mathbb{Z}}
\newcommand\RR{\mathbb{R}}
\newcommand\NN{\mathbb{Z}_{ > 0 }}
\newcommand\QQ{\mathbb{Q}}
\newcommand\cO{\mathcal{O}}
\newcommand\Zon{\operatorname{\mathcal{Z}}}
\newcommand\bP{\mathbf{P}}
\newcommand\bc{\mathbf{c}}
\newcommand\be{\mathbf{e}}
\newcommand\bg{\mathbf{g}}
\newcommand\bm{\mathbf{m}}
\newcommand\bn{\mathbf{n}}
\newcommand\bp{\mathbf{p}}
\newcommand\br{\mathbf{r}}
\newcommand\bt{\mathbf{t}}
\newcommand\bv{\mathbf{v}}
\newcommand\bw{\mathbf{w}}
\newcommand\bx{\mathbf{x}}
\newcommand\by{\mathbf{y}}
\newcommand\bzero{\mathbf{0}}
\newcommand\bone{\mathbf{1}}
\begin{document}

\title{Deep lattice points in zonotopes, lonely runners, and lonely rabbits}

\author{Matthias Beck}
\address{Department of Mathematics\\
         San Francisco State University\\
         San Francisco, CA 94132\\
         U.S.A.}
\email{becksfsu@gmail.com}

\author{Matthias Schymura}
\address{Institut f\"ur Mathematik\\
  Universit\"at Rostock\\
  Campus Ulmenstra\ss e 69\\
  D-18051 Rostock\\
  Germany}
\email{matthias.schymura@uni-rostock.de}


\begin{abstract}
Let $K \subseteq \RR^d$ be a convex body and let $\bw \in \inter(K)$ be an interior point of~$K$. The \emph{coefficient of asymmetry} $\ca(K,\bw) := \min\{ \lambda \geq 1 : \bw - K \subseteq \lambda (K - \bw) \}$ has been studied extensively in the realm of Hensley's conjecture on the maximal volume of a
$d$-dimensional lattice polytope that contains a fixed positive number of interior
lattice points. We study the coefficient of asymmetry for \emph{lattice zonotopes}, i.e.,
Minkowski sums of line segments with integer endpoints. Our main result gives the
existence of an interior lattice point whose coefficient of asymmetry is bounded above by
an explicit constant in $\Theta(d \log\log d)$, for any lattice zonotope that has an
interior lattice point. Our work is both inspired by and feeds on Wills' lonely runner
conjecture from Diophantine approximation: we make intensive use of a discrete version of
this conjecture, and reciprocally, we reformulate the lonely runner conjecture in terms
of the coefficient of asymmetry of a zonotope.
\end{abstract}



\date{28 January 2023}
 
\thanks{We thank Gennadiy Averkov for valuable discussions and the pointer to the
argument in the proof of Proposition~\ref{prop:ca-versus-minkowski-norm}, and J\"org Wills for pointing us to the references~\cite{schark1974eine,scharkwills1973asymptotisches}.}

\maketitle

\section{Introduction}

We study two seemingly disjoint problems, namely the determination of the coefficients of asymmetry of zonotopes -- a concept from the geometry of numbers -- and the lonely runner conjecture from Diophantine approximation.
As we will see, the two problems feed on each other in more than one way. We start by introducing them one at a time.

The following conjecture was raised by J\"org M.~Wills in the 1960's~\cite{willslonelyrunner}.

\begin{lrc}
Given pairwise distinct integers $n_0, n_1, \dots, n_d$, for each $0 \leq i \leq d$ there exists a real number~$t$ such that for all $0 \le j \le d$, $i \ne j$, the distance of $t \, ( n_i - n_j )$ to the nearest integer is at least $\frac{ 1 }{ d+1 }$.
\end{lrc}

Wills originally formulated this conjecture for \emph{real} numbers $n_0, n_1, \dots, n_d$, but it can be reduced to the rational and thus integral case~\cite{bohmanholzmankleitman2001six,henzemalikiosis}.
The lower bound $\frac{ 1 }{ d+1 }$ is best possible, as the case $n_j = j$ for $0 \le j
\le d$ and a classic result of Dirichlet on Diophantine approximation (see, e.g.,
\cite{cassels}) show.
The name \emph{Lonely Runner Conjecture}, introduced by Goddyn in~\cite{bieniagoddynetal}, stems from the charming model of $d+1$ runners going at different constant speeds around a
circular track of length~1 (having started at the same place and time); the conjecture says that each of them will at some point have distance at least $\frac{ 1 }{ d+1 }$ to the other runners.
For more on the Lonely Runner Conjecture's history, proofs for $d \le 6$, and connections to Diophantine approximation, view-obstruction problems, and graph theory, see
\cite{betkewills, 
cusick1973viewobstruction,
cusickpomerance, 
bohmanholzmankleitman2001six, 
barajasserra, 
6lrcRenault, 
tao2018someremarks}. 

A simple observation leads to a more convenient formulation of the problem:
The distance of any two runners at any given time depends only on their relative speeds.
So we may pick a fixed runner, say the one with speed $n_0$, reduce the speed of every runner by $n_0$ and consider only the loneliness of the first runner that is now stagnant.
So with
\begin{equation}\label{eqn:run-d}
  \run(d) := \inf_{\alpha \in \ZZ^d} \,\,\,\sup_{Q \in \RR} \,\,\,\min_{1 \leq i \leq d} \|
Q \alpha_i \|_\ZZ\, ,
\end{equation}
where $\| \cdot \|_\ZZ$ denotes the distance to the nearest integer, we may restate the
lonely runner conjecture as
\[
  \run(d) = \frac{ 1 }{ d+1 } \, .
\]


A related quantity, stemming from an inhomogeneous Diophantine approximation problem (we
will give more details in Section~\ref{sect:lonelyrabbitsrunners} below), is
\begin{align}
\rab(d) := \inf_{\alpha \in (\RR \setminus \ZZ)^d} \,\,\,\sup_{Q \in \ZZ} \,\,\,\min_{1 \leq i \leq d} \| Q \alpha_i \|_\ZZ \, .\label{eqn:psi-d}
\end{align}
The lonely runner view is that now the runners are only allowed to take simultaneous \emph{jumps} rather than continuous moves.
(Also each of them has a non-integer velocity.)
We therefore 
refer to the challenge of determining~$\rab(d)$ as the \emph{Lonely Rabbit Problem}.
Contrary to the situation of the lonely runner, this problem has long been solved.
The first three values and a first general bound were provided by Wills~\cite{wills1968zursimultanenII}, who showed that
\begin{align}
\rab(1) = \frac13 \ ,\ \rab(2) = \frac15\ ,\ \rab(3) = \frac18 \quad \textrm{ and } \quad
\rab(d) \leq \frac{1}{6(d-2)} \ \textrm{ for } d \geq 4.\label{eqn:psi-d-wills-bound}
\end{align}
Cusick~\cite{cusick1972simultaneous} conjectured the precise value of $\rab(d)$, for any given~$d$, and proved his claim for $d \leq 7$.
Confirming Cusick's conjecture in general, Schark~\cite{schark1974eine} completely solved the Lonely Rabbit Problem by proving that
\begin{align}
\rab(d) &= \frac{1}{w(d)} \quad \textrm{ with } \quad w(d) := \max\left\{ z \in \NN : \tfrac12 \varphi(z) + h(z) \leq d\right\},\label{eqn:psi-d-w-d}
\end{align}
where
$
\varphi(z) = \#\left\{ a \in \NN : 1 \leq a \leq z, \ \gcd(a,z) = 1\right\}
$
is Euler's totient function and
\[
h(z) :=
\begin{cases}
0 &\textrm{ if } z \textrm{ is prime}, \\
h &\textrm{ if } z \textrm{ is composite},
\end{cases}
\]
with $h$ being the number of distinct prime divisors of a composite number~$z$.
On the asymptotic side, it was shown by Schark \& Wills~\cite{scharkwills1973asymptotisches} that
\begin{align}
\lim_{d \to \infty} \rab(d) \cdot d \cdot \log \log d = e^{-2 \gamma},\label{eqn:psi-d-asymptotics}
\end{align}
where $\gamma = 0.57721...$ is the Euler--Mascheroni constant.

Next we introduce the main geometric players of this paper.
Let $K \subseteq \RR^d$ be a convex body and let $\bw \in \inter(K)$ be an interior point of~$K$.
The \emph{coefficient of asymmetry} of~$\bw$ inside~$K$ is defined as
\begin{align}
\ca(K,\bw) &:= \max_{\br \in \RR^d \setminus \{\bzero\}} \frac{\max\{ \lambda > 0 : \bw + \lambda \br \in K \}}{\max\{ \lambda > 0 : \bw - \lambda \br \in K \}} = \min\left\{ \lambda \geq 1 : \bw - K \subseteq \lambda (K - \bw) \right\}.\label{eqn:ca-definition}
\end{align}
The first definition is worked with, for instance, in~\cite{pikhurko2001lattice} and the second in~\cite{gruenbaum1963measures}.
The equivalence of both definitions can be easily checked (see, e.g.,~\cite[Lem.~3.2.3]{toth2015measures}).
Clearly, $\ca(K,\bw) \geq 1$ and equality holds if and only if~$K$ is \emph{symmetric about~$\bw$}, meaning that $K - \bw = \bw - K$.
We collect a few more salient properties of $\ca(K,\bw)$ in Section~\ref{sect:coeffasymm}.

Given $\bn = (n_1,\ldots,n_d) \in \NN^d$ with distinct entries (which we may assume to
be relatively prime), let
\[
Z(\bn) := \RR \bn + [0,1]^d,
\]
which we call the \emph{lonely runner zonohedron} associated to $\bn$; here $+$ refers to
Minkowski (i.e., pointwise) sum.
Up to a translation and a dilation, the lonely runner zonohedron was introduced in~\cite{beckhostenschymura2018lrpolyhedra}, 
following a view-obstruction formulation of the Lonely Runner Problem by
Cusick~\cite{cusick1973viewobstruction}. Here we develop this geometric picture further
by deriving (in Section~\ref{sect:lonelyrunners}) the following equivalent formulation.

\begin{lrc}
\label{lrc:coefficient-of-asymmetry}
Let $\bn \in \ZZ^d_{>0}$ have pairwise distinct entries.
Then there exists an interior lattice point $\bw \in \inter(Z(\bn)) \cap \ZZ^d$ such that $\ca(Z(\bn),\bw) \leq d$.
\end{lrc}

Given $\bg_1,\ldots,\bg_m \in \RR^d$, let
\[
 \Zon(\bg_1,\ldots,\bg_m) := \sum_{j=1}^m [\bzero,\bg_j] \subseteq \RR^d ,
\]
the \emph{zonotope generated by} $\bg_1,\ldots,\bg_m$; alternatively, we can think of
$\Zon(\bg_1,\ldots,\bg_m)$ as the projection of $[0,1]^m$ under the matrix with column
vectors $\bg_1,\ldots,\bg_m$.
We call $\Zon(\bg_1,\ldots,\bg_m)$ a \emph{lattice zonotope} if $\bg_1,\ldots,\bg_m \in \ZZ^d$.
Thus $Z(\bn)$ is an infinite version of the \emph{lonely runner zonotope}
$\Zon(\be_1,\ldots,\be_d,\bn)$, where $\be_i$ is the $i$th coordinate unit vector.

The above geometric connection to the Lonely Runner Conjecture suggests a study of the
coefficient of asymmetry of lattice zonotopes, and the following is our main result.

\begin{theorem}
\label{thm:main-ca-precise-bound-zonotopes}
Let $Z = \Zon(\bg_1,\ldots,\bg_m) \subseteq \RR^d$ be a lattice zonotope with $m \geq d$ generators and such that $\inter(Z) \cap \ZZ^d \neq \emptyset$.
Then there exists an interior lattice point $\bw \in \inter(Z) \cap \ZZ^d$ such that
\[
\ca(Z,\bw) \leq w(d) - 1 \in \Theta(d \log\log d).
\]
Moreover, the bound is best possible.
\end{theorem}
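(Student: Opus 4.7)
The plan is to reformulate the coefficient-of-asymmetry bound as a Diophantine approximation question and then resolve it using Schark's solution of the Lonely Rabbit Problem, namely $\rab(d)=1/w(d)$ from~\eqref{eqn:psi-d-w-d}. Throughout I would write $Z = G[0,1]^m$ with $G = (\bg_1 \mid \cdots \mid \bg_m) \in \ZZ^{d \times m}$ of rank $d$, and let $\bc := \tfrac12 G \bone$ denote the centre of $Z$.

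First, I would apply Proposition~\ref{prop:ca-versus-minkowski-norm} together with the central symmetry of $Z$ about $\bc$: setting $C := Z - \bc$, one has
\[
\ca(Z, \bw) = \frac{1 + \|\bw - \bc\|_C}{1 - \|\bw - \bc\|_C}.
\]
Since $C = G\bigl[-\tfrac12,\tfrac12\bigr]^m$, the gauge satisfies $\|\bw - \bc\|_C = 2\inf\{\|\bp-\tfrac12\bone\|_\infty : G\bp=\bw\}$, and a short calculation then shows $\ca(Z, \bw)\leq w(d)-1$ is equivalent to the existence of a representative $\bp \in [\tfrac{1}{w(d)},\,1-\tfrac{1}{w(d)}]^m$ with $G\bp=\bw$. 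Any such $\bp$ lies strictly inside the unit cube, so $\bw$ is automatically an interior lattice point, and the theorem reduces to producing \emph{one} such $\bp$.

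Second, I would reduce this existence statement to the Lonely Rabbit. Writing $\bp = \tfrac12\bone + \bq$, one seeks $\bq\in\RR^m$ with $\|\bq\|_\infty \leq \tfrac12-\tfrac{1}{w(d)}$ and $G\bq \equiv -\bc\pmod{\ZZ^d}$. Modulo $\ZZ^m$, the admissible $\bq$'s form a single coset of the closed subgroup $\ker\bar G \leq \RR^m/\ZZ^m$ (where $\bar G$ is the induced map to $\RR^d/\ZZ^d$), whose identity component has dimension $m-d$; the hypothesis $\inter(Z)\cap\ZZ^d\neq\emptyset$ guarantees this coset is non-empty. Choosing a section adapted to the rank-$d$ structure of $G$ and the half-integer vector $\bc$, I would extract a tuple $(\alpha_1,\dots,\alpha_d)\in(\RR\setminus\ZZ)^d$ to which Schark's theorem supplies an integer $Q$ with $\min_i\|Q\alpha_i\|_\ZZ\geq 1/w(d)$, from which the required $\bq$ is recovered.

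The main obstacle is precisely this last reduction from an $m$-dimensional cube constraint to $d$ Diophantine ones: one has to show, using the rank-$d$ structure, that the $m-d$ ``kernel coordinates'' of $\bp$ can always be aligned into $[\tfrac{1}{w(d)}, 1-\tfrac{1}{w(d)}]$ by integer translations, so that the Diophantine content is carried exclusively by the $d$ rabbit-type quantities $\alpha_i$. Sharpness then follows by reversing the construction: Schark's extremal $\alpha\in(\RR\setminus\ZZ)^d$ achieving $\sup_{Q\in\ZZ}\min_i\|Q\alpha_i\|_\ZZ = 1/w(d)$ lifts to integer generators whose zonotope has centre at the maximum possible $C$-distance $1-2/w(d)$ from $\ZZ^d$, realising $\ca(Z,\bw)=w(d)-1$ for every interior lattice point.
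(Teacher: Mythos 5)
Your reformulation is sound and matches the paper's in substance: via Proposition~\ref{prop:ca-versus-minkowski-norm}, the bound $\ca(Z,\bw)\leq w(d)-1$ is indeed equivalent to finding a preimage $\bp\in[\tfrac{1}{w(d)},1-\tfrac{1}{w(d)}]^m$ of $\bw$ under $G$, and Schark's theorem ($\rab(d)=1/w(d)$, i.e., $\delta_d=\tfrac12-\tfrac{1}{w(d)}$) is the right engine. But the step you yourself call ``the main obstacle'' is not a technicality --- it is the heart of the theorem. As written, the $m$ coordinates of $\bp$ give $m$ non-integral numbers, and applying the rabbit theorem to all of them only yields the bound $w(m)-1$; to get $w(d)-1$ you must genuinely collapse the problem to $d$ Diophantine quantities while simultaneously keeping the other $m-d$ cube constraints satisfied, and you give no mechanism for doing so. ``Choosing a section adapted to the rank-$d$ structure'' does not work naively: picking $d$ linearly independent columns and multiplying by Schark's $Q$ moves the remaining $m-d$ coordinates as well, and the integer translations available for realigning them (the lattice $\ker G\cap\ZZ^m$) need not act freely on those coordinates without disturbing the chosen $d$.

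The paper closes exactly this gap with a geometric rather than Diophantine input: a zonotopal Steinitz theorem (Theorem~\ref{thm:zonotopal-steinitz}). Using Shephard's tiling of a facet of $Z$ into parallelepipeds, one shows that every interior point of $Z$ lies in the interior of a $d$-dimensional \emph{lattice parallelepiped} $P\subseteq Z$ sharing the center of $Z$. One then runs your argument inside $P$ (this is Theorem~\ref{thm:ca-parallelepipeds}, where the coordinates really are $d$ numbers $\alpha_i\in(0,1)$ and the jump $\bw\mapsto Q\bw-\sum_i P_i\bv_i$ applies verbatim), and transfers the bound back to $Z$ by monotonicity of $\ca$ for concentric symmetric bodies (Corollary~\ref{cor:ca-monotonicity}). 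Your sharpness paragraph is likewise only an assertion: you must verify that \emph{every} interior lattice point of the extremal zonotope is at gauge distance exactly $1-2/w(d)$ from the center (the paper does this via multiplicative inverses mod $z$ in Proposition~\ref{prop:extremal-parallelepipeds}), and you must handle the dimension defect when $\tfrac12\varphi(w(d))+h(w(d))<d$ (e.g., $d=12$), which requires padding with generators $2\be_i$ as in Proposition~\ref{prop:extremal-lattice-parallelepipeds}.
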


We note that our above reformulation of the Lonely Runner Conjecture gives a strong
connection to the coefficient of asymmetry of an (infinite version of a) zonotope, and
reciprocally, Theorem~\ref{thm:main-ca-precise-bound-zonotopes} yields an intimate
connection of the coefficient of asymmetry of a zonotope with the lonely rabbit constant~$w(d) = \rab(d)^{-1}$.

The coefficient of asymmetry of interior lattice points of general lattice polytopes has been studied extensively in the realm of Hensley's conjecture on the maximal volume of a $d$-dimensional lattice polytope that contains a fixed positive number of interior lattice points.
We refer to~\cite{pikhurko2001lattice,averkovkruempelmannnill2020latticesimplices} and the references therein for more information.
The best-known bound to date is the following:

\begin{theorem}[{Averkov, Kr\"umpelmann \& Nill~\cite[Proof of Theorem~1.4]{averkovkruempelmannnill2020latticesimplices}}]
\label{thm:ca-general-lattice-polytopes}
Let $P \subseteq \RR^d$ be a lattice polytope with $\inter(P) \cap \ZZ^d \neq \emptyset$.
Then there is an interior lattice point $\bw \in \inter(P) \cap \ZZ^d$ with
\[
\ca(P,\bw) \leq d(2d+1)\left( s_{2d+1} -1 \right) - 1,
\]
where $s_1 = 2$ and $s_i = 1 + s_1 \cdot\dots\cdot s_{i-1}$, for $i \geq 2$, denotes the double exponentially growing Sylvester sequence.
\end{theorem}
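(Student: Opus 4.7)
My strategy is to recast the bound $\ca(P, \bw) \le L$ as membership of $\bw$ in a shrunken version of $P$, and then exhibit a lattice point in that shrinkage by invoking Sylvester-sequence-type bounds for lattice simplices of Hensley--Pikhurko type. The first step is a reformulation: writing $P = \{x : \langle a_i, x \rangle \le \beta_i\}_{i=1}^m$ in primitive facet form, setting $\gamma_i := -\min_{x \in P} \langle a_i, x \rangle$, and letting $W_i := \beta_i + \gamma_i$ be the width of $P$ in direction $a_i$, a direct unwinding of the definition gives, for $\bw \in \inter(P)$,
\[
\ca(P, \bw) + 1 \;=\; \max_{1 \le i \le m} \frac{W_i}{\beta_i - \langle a_i, \bw \rangle}.
\]
Consequently $\ca(P, \bw) \le L$ is equivalent to $\bw$ lying in the inner-parallel body $P^{(L)} := \{x \in \RR^d : \langle a_i, x \rangle \le \beta_i - W_i/(L+1),\ i = 1, \ldots, m\}$, which is automatically contained in $\inter(P)$. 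It therefore suffices to show $P^{(L_0)} \cap \ZZ^d \neq \emptyset$ for $L_0 := d(2d+1)(s_{2d+1}-1)-1$.

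\textbf{Reduction to a lattice simplex.} I would argue by contradiction, assuming $P^{(L_0)} \cap \ZZ^d = \emptyset$. Combining a Doignon/Helly-type argument for lattice-point-free polyhedra with a Carath\'eodory selection, one may retain a bounded number of the defining halfspaces of $P^{(L_0)}$ that already jointly exclude all lattice points; the corresponding at most $d+1$ facets of $P$ cut out a sub-simplex $S \subseteq P$ which still contains all interior lattice points of $P$ and is itself a lattice simplex with interior lattice points. The question now becomes purely simplicial: \emph{some} interior lattice point of $S$ must have bounded relative depth to every facet of $S$ simultaneously.

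\textbf{Sylvester bound with simultaneous control.} For a lattice simplex $S$ with vertices $v_0, \ldots, v_d$, writing an interior lattice point in barycentric coordinates $\bw = \sum_j \lambda_j v_j$ one verifies that $\ca(S, \bw)$ is controlled by $\max_j (1-\lambda_j)/\lambda_j$. The classical Hensley--Lagarias--Ziegler--Pikhurko machinery, driven by the Sylvester recursion $s_{k+1} = 1 + s_1 \cdots s_k$, produces via a unit-fraction greedy argument an interior lattice point satisfying $\min_j \lambda_j \ge 1/(s_{d+1}-1)$. To control \emph{all} facet directions at once rather than the single worst barycentric coordinate, I would lift to a coupled index set of size $2(d+1)$, pairing each facet of $S$ with its opposite-direction extent, and run the Sylvester-style greedy on this lifted system: this yields the factor $s_{2d+1}-1$ in place of $s_{d+1}-1$. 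The polynomial prefactor $d(2d+1)$ combines the Helly selection, the $2d+1$ paired constraints, and the conversion between normalized barycentric-coordinate bounds on $S$ and facet-width bounds in the ambient polytope $P$.

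\textbf{Main obstacle.} The crux of the argument is the lifting in the last step. A naive Sylvester-type bound controls only the \emph{worst-direction} minimum barycentric coordinate of $\bw$ and thus only the worst-direction $\ca$, but the coefficient of asymmetry is itself a maximum over \emph{all} facet directions, so every direction must be bounded simultaneously. The device of passing from the natural $(d+1)$-element index set to the $(2d+1)$-element coupled system --- while keeping only a linear combinatorial overhead in $d$ --- is exactly what produces the sharp constant and is the main technical content of~\cite{averkovkruempelmannnill2020latticesimplices}. Verifying that this lifting is compatible with the lattice structure of $S$, and that the unit-fraction greedy argument indeed achieves the target ratio on the lifted system, is where the bulk of the work would concentrate.
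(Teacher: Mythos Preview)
The paper does not prove this theorem. It is quoted as a known result from \cite{averkovkruempelmannnill2020latticesimplices} (extracted from the proof of their Theorem~1.4) and is used only for context, to contrast the doubly-exponential general bound with the much smaller bound for zonotopes in Theorem~\ref{thm:main-ca-precise-bound-zonotopes}. There is therefore no proof in this paper to compare your proposal against.

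Independently of that, your reduction step has a real gap. Doignon's theorem guarantees that a lattice-point-free polyhedron in $\RR^d$ remains lattice-point-free after retaining at most $2^d$ of its halfspace constraints, not $d+1$; Carath\'eodory concerns convex hulls of points, not intersections of halfspaces, so it cannot be invoked to cut the count down further. Even if one could select $d+1$ facets of~$P$, their intersection need not be bounded, need not be a simplex, and in any case has no reason to have lattice vertices, so the sentence ``cut out a sub-simplex $S \subseteq P$ \ldots\ and is itself a lattice simplex with interior lattice points'' is unsupported on several counts. The lifting step at the end is also too vague to assess: you assert that pairing each facet with its opposite extent yields a $(2d+1)$-element system on which the Sylvester greedy runs, but neither the system nor the reason the greedy argument applies to it is actually specified.
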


The optimal such bound needs to be of double exponential growth in dependence on the dimension~$d$, as the so-called Zaks-Perles-Wills-simplices show (see~\cite{averkovkruempelmannnill2020latticesimplices}).

Our paper is organized as follows.
In Section~\ref{sect:coeffasymm} we review fundamental properties of the coefficient of asymmetry.
Section~\ref{sect:lonelyrunners} discusses the derivation of the equivalent formulation of the Lonely Runner Conjecture in terms of deep interior lattice points in the zonohedron~$Z(\bn)$.
Afterwards in Section~\ref{sect:lonelyrabbitsrunners} we describe the Diophantine approximation problem behind the lonely rabbits problem in detail, and discuss examples of non-integral vectors~$\alpha$ that attain~$\rab(d)$.
With these preparations we give the proof of Theorem~\ref{thm:main-ca-precise-bound-zonotopes} in two steps.
First, we establish it for lattice parallelepipeds in Section~\ref{sect:deep-parallelepipeds} and construct examples that attain the stated upper bound $w(d)-1$.
Second, in Section~\ref{sect:deeplattptszonotopes}, we derive a Carath\'{e}odory/Steinitz type result for zonotopes which enables us to reduce Theorem~\ref{thm:main-ca-precise-bound-zonotopes} to parallelepipeds.
Complementing the discussion of lattice zonotopes that relate to the lonely runner problem, we devote Section~\ref{sect:ehrhart} to the determination of the number of interior lattice points of the lonely runner zonotope and derive a geometric intuition for a finite checking result of Tao~\cite{tao2018someremarks} for the lonely runner conjecture.
We conclude the paper with an outlook for future work and some open questions in Section~\ref{sect:outlook}.

\section{The Coefficient of Asymmetry}
\label{sect:coeffasymm}

We start by collecting basic properties of the coefficient of asymmetry which we will continuously  use in the remainder of the paper.
Most of the statements below are well-known, folklore or follow from first principles.
Since they are rather scattered in the literature~\cite{pikhurko2001lattice,gruenbaum1963measures} and appear often without proof, we aim to give a complete account.

Our first observation is that the coefficient of asymmetry cannot increase under linear transformations.

\begin{proposition}
\label{prop:ca-projections}
Let $K \subseteq \RR^d$ and $L \subseteq \RR^m$ be convex bodies such that there is a linear map $A : \RR^m \to \RR^d$ with $K = AL$.
Then for every $\bw \in \inter(K)$ and every $\bv \in A^{-1}(\bw) \cap \inter(L)$,
\[ \ca(K,\bw) \leq \ca(L,\bv) \, . \]
Moreover, if $m=d$ and $A : \RR^d \to \RR^d$ is invertible, then $\ca(K,\bw) = \ca(L,\bv)$.
\end{proposition}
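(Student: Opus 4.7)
The plan is to work from the second, containment-form, definition in~\eqref{eqn:ca-definition}, which behaves transparently under linear maps. Setting $\mu := \ca(L,\bv)$, the definition gives $\bv - L \subseteq \mu\,(L - \bv)$. I would then apply $A$ to both sides: using that $A$ is linear (so it commutes with scalar multiplication and Minkowski differences) together with $A\bv = \bw$ and $AL = K$, this inclusion transports to $\bw - K \subseteq \mu\,(K - \bw)$. Since $\bw \in \inter(K)$, the value $\mu$ is a legitimate candidate in the minimum defining $\ca(K,\bw)$, giving $\ca(K,\bw) \leq \mu = \ca(L,\bv)$.

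For the equality statement when $A$ is invertible, I would simply apply the already-proved inequality twice. The first application with the map $A$ and the pair $(L,\bv) \to (K,\bw)$ gives $\ca(K,\bw) \leq \ca(L,\bv)$. The second application with the inverse map $A^{-1}$ and the pair $(K,\bw) \to (L,\bv)$ — which is valid because $L = A^{-1}K$ and $\bv = A^{-1}\bw \in \inter(L)$ — gives the reverse inequality $\ca(L,\bv) \leq \ca(K,\bw)$.

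I do not anticipate any serious obstacles here; the argument is essentially a one-line chase once one commits to the containment formulation. The only thing worth double-checking is the algebraic identity $A(\bv - L) = \bw - K$ and $A\bigl(\mu(L-\bv)\bigr) = \mu(K - \bw)$, both of which are immediate from linearity of $A$. Importantly, one does not need to invoke the ratio-of-maxima form of $\ca$, which would require a more delicate argument about how supporting rays through $\bv$ are mapped to rays through $\bw$ and how the two maxima in the quotient behave under the possibly non-injective $A$. Sticking with the containment formulation sidesteps all of this.
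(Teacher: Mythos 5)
Your proposal is correct and follows essentially the same route as the paper: both use the containment form of \eqref{eqn:ca-definition}, push the inclusion $\bv - L \subseteq \lambda(L-\bv)$ through the linear map $A$ to obtain $\bw - K \subseteq \lambda(K-\bw)$, and then deduce the equality case by applying the inequality to both $A$ and $A^{-1}$. No gaps.
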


\begin{proof}
We use the second definition of $\ca(L,\bv)$ in~\eqref{eqn:ca-definition} and obtain 
\begin{align*}
\ca(L,\bv) &= \min\left\{ \lambda \geq 1 : \bv - L \subseteq \lambda (L - \bv)\right\} \ \geq \ \min\left\{ \lambda \geq 1 : A(\bv - L) \subseteq \lambda\, A(L - \bv)\right\} = \ca(K,\bw).
\end{align*}
For the second claim, we apply the just established inequality to both~$A$ and~$A^{-1}$.
\end{proof}

Now, let $K \subseteq \RR^d$ be a \emph{centrally symmetric} convex body, that is, $K$ is symmetric about~$\bzero$ so that $K = -K$.
Each such~$K$ induces a norm $\|\cdot\|_K : \RR^d \to \RR$ via
\[
\| \bx \|_K := \min\{\lambda \geq 0 : \bx \in \lambda K\}, \quad \textrm{ for } \quad \bx \in \RR^d.
\]
This norm function obeys an exact relationship with the coefficient of asymmetry:

\begin{proposition}
\label{prop:ca-versus-minkowski-norm}
Let $K \subseteq \RR^d$ be a centrally symmetric convex body and let $\bw \in \inter(K)$ be an interior point.
Then
\[
\ca(K,\bw) = \frac{1 + \|\bw\|_K}{1 - \|\bw\|_K} \qquad \textrm{ and } \qquad \|\bw\|_K =
\frac{\ca(K,\bw) - 1}{\ca(K,\bw) + 1} \, .
\]
\end{proposition}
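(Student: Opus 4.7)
The plan is to reduce everything to the Minkowski norm $\|\cdot\|_K$. Since $K$ is centrally symmetric about $\bzero$ and bounded with $\bzero \in \inter(K)$, we have $K = \{\bx \in \RR^d : \|\bx\|_K \leq 1\}$ and $\partial K = \{\bx : \|\bx\|_K = 1\}$. Write $\mu := \|\bw\|_K$; interiority of $\bw$ forces $\mu \in [0,1)$, so both expressions in the proposition are well-defined. I would prove the first identity directly, as the second follows from it by solving $\ca = (1+\mu)/(1-\mu)$ for $\mu$ — a purely algebraic step that I would leave to the reader.

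For the main inequality I would work with the first expression for $\ca(K,\bw)$ in~\eqref{eqn:ca-definition}. For each direction $\br \neq \bzero$, set $\lambda_\pm(\br) := \max\{\lambda > 0 : \bw \pm \lambda \br \in K\}$; both maxima are attained, and the endpoints $\bw \pm \lambda_\pm(\br)\br$ lie on $\partial K$ and hence have $\|\cdot\|_K$-norm exactly~$1$. The key step is then a double application of the triangle inequality, rewriting $\lambda_\pm(\br)\br = (\bw \pm \lambda_\pm(\br)\br) \mp \bw$ and using positive homogeneity to obtain
\[
1 - \mu \ \leq \ \lambda_\pm(\br)\,\|\br\|_K \ \leq \ 1 + \mu.
\]
Dividing the two resulting chains bounds every ratio $\lambda_+(\br)/\lambda_-(\br)$ by $(1+\mu)/(1-\mu)$, uniformly in~$\br$, and hence $\ca(K,\bw) \leq (1+\mu)/(1-\mu)$. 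This triangle-inequality estimate is really the heart of the argument.

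To see that the bound is sharp, I would pick the direction $\br = -\bw$ (valid whenever $\mu > 0$; the case $\mu = 0$ is trivial since then $K$ is symmetric about $\bw$ and both sides of the claimed identity equal~$1$). A short substitution shows that $\bw \pm \lambda(-\bw) = (1 \mp \lambda)\bw$ belongs to $K$ precisely when $|1 \mp \lambda|\,\mu \leq 1$, so $\lambda_+(-\bw) = \mu^{-1}+1$ and $\lambda_-(-\bw) = \mu^{-1}-1$, and the ratio equals exactly $(1+\mu)/(1-\mu)$. I do not anticipate any serious obstacle: the whole proof should consist of the single triangle-inequality observation, the tightness computation in the radial direction $-\bw$, and a one-line inversion to extract the second formula.
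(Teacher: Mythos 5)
Your argument is correct, but it takes a genuinely different route from the paper's. You work with the first (ratio-of-chords) expression for $\ca(K,\bw)$ in~\eqref{eqn:ca-definition}: the upper bound $\lambda_+(\br)/\lambda_-(\br) \leq (1+\mu)/(1-\mu)$ comes from the triangle and reverse triangle inequalities applied to the boundary points $\bw \pm \lambda_\pm(\br)\br$, and sharpness is verified by the explicit computation in the radial direction $\br = -\bw$. The paper instead starts from the second (inclusion) definition and, using $K=-K$ together with the cancellation law for Minkowski addition, converts $\bw - K \subseteq \lambda(K-\bw)$ into the single membership condition $(1+\lambda)\bw \in (\lambda-1)K$, i.e.\ $\|\bw\|_K \leq \frac{\lambda-1}{\lambda+1}$; this yields both inequalities simultaneously as one chain of equivalences, with no separate tightness argument. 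Your proof is more elementary in that it needs nothing beyond the norm axioms and the identification $\bound(K) = \{\bx : \|\bx\|_K = 1\}$, at the cost of being a two-sided estimate plus an extremal-direction check; the paper's is shorter but leans on the (standard, referenced) Minkowski cancellation law and the identity $\lambda K = K + (\lambda-1)K$ for convex $K$. All the small points you would need to make rigorous --- attainment of the maxima defining $\lambda_\pm(\br)$, positivity of $\|\br\|_K$, strictness of $\mu < 1$, and the trivial case $\mu = 0$ --- are correctly accounted for in your sketch.
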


\begin{proof}
The two relations are equivalent, so that it suffices to prove the second.
Moreover, we use the second definition in~\eqref{eqn:ca-definition} and recall that $\ca(K,\bw)$ equals the minimal $\lambda \geq 1$ such that $\bw - K \subseteq \lambda (K - \bw)$.
In view of the symmetry $K = -K$ and the cancellation laws for Minkowski addition of convex bodies (see,
e.g.,~\cite[p.~48]{schneider2014convexbodies}), the following equivalences hold for every~$\lambda \geq 1$:
\begin{align*}
\bw - K \subseteq \lambda (K - \bw) & \quad\Longleftrightarrow\quad (1 + \lambda) \bw + K \subseteq \lambda K \quad\Longleftrightarrow\quad (1 + \lambda) \bw \in (\lambda - 1 )K.
\end{align*}
Therefore, $\ca(K,\bw) \leq \lambda$ if and only if $\|\bw\|_K \leq \frac{\lambda - 1}{\lambda + 1}$, which implies $\|\bw\|_K = \frac{\ca(K,\bw) - 1}{\ca(K,\bw) + 1}$.
\end{proof}

As a useful corollary, we obtain that the coefficient of asymmetry is monotonically non-increasing on centrally symmetric convex bodies:

\begin{corollary}
\label{cor:ca-monotonicity}
Let $K,L \subseteq \RR^d$ be centrally symmetric convex bodies with $K \subseteq L$, and let $\bw \in \inter(K)$ be an interior point in~$K$.
Then $\ca(L,\bw) \leq \ca(K,\bw)$.
\end{corollary}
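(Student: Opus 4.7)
The plan is to invoke Proposition~\ref{prop:ca-versus-minkowski-norm}, which identifies $\ca(K,\bw)$ with an explicit monotone function of the Minkowski norm $\|\bw\|_K$, and then reduce the claim to the inclusion-monotonicity of that norm.

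First, I would note that $\bw \in \inter(K) \subseteq \inter(L)$, so Proposition~\ref{prop:ca-versus-minkowski-norm} applies to both bodies. From the definition of the Minkowski norm induced by a centrally symmetric body, the inclusion $K \subseteq L$ immediately gives $\{\lambda \geq 0 : \bx \in \lambda K\} \subseteq \{\lambda \geq 0 : \bx \in \lambda L\}$ for every $\bx \in \RR^d$, hence $\|\bx\|_L \leq \|\bx\|_K$. Specializing to $\bx = \bw$ yields $\|\bw\|_L \leq \|\bw\|_K < 1$.

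Second, by Proposition~\ref{prop:ca-versus-minkowski-norm} we have
\[
\ca(L,\bw) = \frac{1 + \|\bw\|_L}{1 - \|\bw\|_L} \qquad \textrm{and} \qquad \ca(K,\bw) = \frac{1 + \|\bw\|_K}{1 - \|\bw\|_K}.
\]
Since the function $f(t) = \frac{1+t}{1-t}$ is strictly increasing on $[0,1)$ (its derivative equals $\frac{2}{(1-t)^2} > 0$), the inequality $\|\bw\|_L \leq \|\bw\|_K$ yields $\ca(L,\bw) \leq \ca(K,\bw)$.

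There is no real obstacle here; the proof is essentially a two-line computation once Proposition~\ref{prop:ca-versus-minkowski-norm} is in hand. The only subtlety worth flagging is that central symmetry is used \emph{twice}: once for each body, since Proposition~\ref{prop:ca-versus-minkowski-norm} requires a symmetric convex body in order to convert $\ca$ into a norm-based expression. Without central symmetry, the corollary fails in general, as one cannot, on an asymmetric body, separate the roles of the two rays in the definition of $\ca$.
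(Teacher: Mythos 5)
Your proof is correct and follows essentially the same route as the paper: both arguments observe that $K \subseteq L$ forces $\|\bw\|_L \leq \|\bw\|_K$ and then apply Proposition~\ref{prop:ca-versus-minkowski-norm} together with the monotonicity of $t \mapsto \frac{1+t}{1-t}$ on $[0,1)$. Your extra remarks (verifying $\bw \in \inter(L)$ and noting where central symmetry is used) are accurate but not needed beyond what the paper records.
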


\begin{proof}
By definition of the norm associated with~$K$ and~$L$, we have $\| \bw \|_L \leq \| \bw \|_K$.
In view of Proposition~\ref{prop:ca-versus-minkowski-norm} this implies
\[
\ca(L,\bw) = \frac{1 + \| \bw \|_L}{1 - \| \bw \|_L} \leq \frac{1 + \| \bw \|_K}{1 - \| \bw \|_K} = \ca(K,\bw),
\]
where we have also used that the function $x \mapsto \frac{1+x}{1-x}$ is non-decreasing on~$[0,1)$.
\end{proof}

Note that the symmetry condition on~$K$ and~$L$ is crucial.
For example, consider $K = [-1,1]^d$, $\bv \in \RR^d \setminus K$ and let $L$ be the convex hull of~$K$ and~$\bv$.
Then for the center $\bw = \bzero$ of~$K$, we have $\ca(K,\bw) = 1 < \ca(L,\bw)$, since otherwise~$L$ would be centrally symmetric as well.

Furthermore, $\ca(L,\bv)$ is invariant under simultaneous translations and scalings of $L$ and~$\bv$.
More precisely:

\begin{proposition}
\label{prop:ca-covariance-translation-scaling}
Let $K \subseteq \RR^d$ be a convex body with an interior point $\bw \in \inter(K$).
Then for every translation vector $\bt \in \RR^d$ and every scaling factor $s > 0$, 
\[
\ca(K + \bt,\bw + \bt) = \ca(K,\bw) \quad\textrm{ and } \quad \ca(s \, K,s \, \bw) = \ca(K,\bw).
\]
\end{proposition}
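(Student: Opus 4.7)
The plan is to work directly from the second characterization of the coefficient of asymmetry given in~\eqref{eqn:ca-definition}, namely
\[
\ca(K,\bw) = \min\left\{ \lambda \geq 1 : \bw - K \subseteq \lambda (K - \bw) \right\},
\]
and observe that the containment condition inside the braces is literally preserved under both operations. The two claims are independent and can be handled separately.

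For the translation invariance, I would compute $(\bw + \bt) - (K + \bt) = \bw - K$ and $(K + \bt) - (\bw + \bt) = K - \bw$, so the Minkowski difference appearing on each side of the inclusion is unchanged by a common translation. Consequently, the set of admissible $\lambda \geq 1$ defining $\ca(K + \bt, \bw + \bt)$ coincides with the one defining $\ca(K,\bw)$, and their minima agree.

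For the scaling invariance, I would similarly note that $s\bw - sK = s(\bw - K)$ and $sK - s\bw = s(K - \bw)$. Since $s > 0$, scaling both sides of an inclusion by $s$ is an equivalence, so $\bw - K \subseteq \lambda (K - \bw)$ holds if and only if $s\bw - sK \subseteq \lambda (sK - s\bw)$. Again the two minimization problems have identical feasible sets and thus the same value. Alternatively, one can just invoke Proposition~\ref{prop:ca-projections} applied to the invertible linear map $A : \RR^d \to \RR^d$, $\bx \mapsto s\bx$, which directly yields $\ca(sK, s\bw) = \ca(K, \bw)$.

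There is no genuine obstacle here: the proof is a routine unravelling of the definition, and the only mildly subtle point is the use of cancellation for Minkowski sums, which is not needed in this argument (unlike in Proposition~\ref{prop:ca-versus-minkowski-norm}) because translation and positive scaling act as bijections on $\RR^d$ and hence commute with set inclusion on the nose.
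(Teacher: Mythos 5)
Your proof is correct and is exactly the argument the paper intends: the paper's own proof consists of the single line ``This follows directly from the second definition in~\eqref{eqn:ca-definition},'' and your computation of $(\bw+\bt)-(K+\bt)=\bw-K$, $s\bw-sK=s(\bw-K)$, etc., is just that observation spelled out. Nothing further is needed.
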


\begin{proof}
This follows directly from the second definition in~\eqref{eqn:ca-definition}.
\end{proof}

In later sections, we often need the precise description of the coefficient of asymmetry of an interior point in a lattice parallelepiped, and thus record this as a corollary of the properties above:

\begin{corollary}
\label{cor:ca-parallelepipeds}
Let $P = [\bzero,\bv_1] + \dots + [\bzero,\bv_d] \subseteq \RR^d$ be a lattice parallelepiped and let $\bw = \sum_{i=1}^d \alpha_i \bv_i \in \inter(P)$, for some $\alpha = (\alpha_1,\ldots,\alpha_d) \in (0,1)^d$.
Then
\[
\ca(P,\bw) = \frac{\frac12 + \max_{1 \leq i \leq d} |\alpha_i - \frac12|}{\frac12 - \max_{1 \leq i \leq
d} |\alpha_i - \frac12|} \, .
\]
\end{corollary}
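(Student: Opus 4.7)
The plan is to reduce the statement to the standard cube via the invariances collected in this section, and then compute directly using the norm interpretation of the coefficient of asymmetry.

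Since $\bw\in\inter(P)$, the parallelepiped~$P$ is $d$-dimensional, so $\bv_1,\ldots,\bv_d$ are linearly independent and the linear map $A\colon \RR^d\to\RR^d$ sending $\be_i$ to $\bv_i$ is invertible. By Proposition~\ref{prop:ca-projections}, applied with $L=[0,1]^d$ and $K=P=A L$, we have $\ca(P,\bw)=\ca([0,1]^d,\alpha)$, where $\alpha=A^{-1}\bw$ is precisely the coefficient vector from the statement.

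Next, I would use Proposition~\ref{prop:ca-covariance-translation-scaling} to translate by $-\tfrac12\bone$, replacing the unit cube by the centrally symmetric body $C:=[-\tfrac12,\tfrac12]^d$ and the point~$\alpha$ by $\alpha-\tfrac12\bone$. Thus
\[
\ca(P,\bw)=\ca\!\left(C,\alpha-\tfrac12\bone\right).
\]
Now $C$ is centrally symmetric about~$\bzero$, so Proposition~\ref{prop:ca-versus-minkowski-norm} applies. A short unpacking of the definition of $\|\cdot\|_C$ gives
\[
\bigl\|\alpha-\tfrac12\bone\bigr\|_C \;=\; 2\,\max_{1\le i\le d}\bigl|\alpha_i-\tfrac12\bigr|,
\]
because $\bx\in\lambda C$ iff $|x_i|\le \lambda/2$ for all~$i$.

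Plugging this into the first formula of Proposition~\ref{prop:ca-versus-minkowski-norm} yields
\[
\ca(P,\bw)=\frac{1+2\max_i|\alpha_i-\tfrac12|}{1-2\max_i|\alpha_i-\tfrac12|}=\frac{\tfrac12+\max_i|\alpha_i-\tfrac12|}{\tfrac12-\max_i|\alpha_i-\tfrac12|},
\]
which is the asserted formula. There is no real obstacle: the only thing one has to check carefully is the Minkowski-norm computation for the centered cube, and the verification that $\alpha\in(0,1)^d$ guarantees $\max_i|\alpha_i-\tfrac12|<\tfrac12$ so that the denominator is positive.
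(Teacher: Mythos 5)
Your proof is correct and follows essentially the same route as the paper: reduce to the unit cube via the invertible linear map (Proposition~\ref{prop:ca-projections}), recenter via Proposition~\ref{prop:ca-covariance-translation-scaling}, and finish with the norm formula of Proposition~\ref{prop:ca-versus-minkowski-norm}. The only cosmetic difference is that the paper normalizes to $[-1,1]^d$ with point $2\alpha-\bone$ and the $\ell_\infty$-norm, whereas you translate to $[-\tfrac12,\tfrac12]^d$ and absorb the factor $2$ into $\|\cdot\|_C$; both computations agree.
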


\begin{proof}
Using Propositions~\ref{prop:ca-projections},~\ref{prop:ca-covariance-translation-scaling}, and~\ref{prop:ca-versus-minkowski-norm} (in this order), we obtain 
\[
\ca(P,\bw) = \ca \left([0,1]^d,\alpha\right) = \ca\left([-1,1]^d,2\alpha - \bone\right) = \frac{1 + \|2\alpha -
\bone\|_\infty}{1 - \|2\alpha - \bone\|_\infty} = \frac{\frac12 + \max_{1 \leq i \leq d} |\alpha_i -
\frac12|}{\frac12 - \max_{1 \leq i \leq d} |\alpha_i - \frac12|} \, ,
\]
where $\bone=(1,\dots,1)^\intercal$ denotes the all-one vector and $\| \cdot \|_\infty$ the maximum norm.
\end{proof}

The second definition in~\eqref{eqn:ca-definition} of the coefficient of asymmetry can be extended to any, possibly unbounded, closed convex set $C \subseteq \RR^d$ with respect to an interior point $\bw \in \inter(C)$, via
\[
\ca(C,\bw) = \min\left\{ \lambda \geq 1 : \bw - C \subseteq \lambda (C - \bw) \right\} \, .
\]
If the recession cone $\rec(C) := \left\{ \br \in \RR^d : \bx + \br \in C \text{ for all }
\bx \in C \right\}$ of~$C$ is a linear subspace $L \subseteq \RR^d$, then we
obtain the following simple description of the coefficient of asymmetry of~$C$ by projecting out~$L$.

\begin{proposition}
\label{prop:ca-linear-recession-cone}
Let $C \subseteq \RR^d$ be a closed convex set such that $L = \rec(C)$ is a linear subspace.
Then, for every $\bw \in \inter(C)$, 
\[
\ca(C,\bw) = \ca \left( C | L^\perp, \bw | L^\perp \right) .
\]
\end{proposition}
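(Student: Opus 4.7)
The plan is to exploit that $L = \rec(C)$ being a \emph{linear} subspace (so $L = -L$) forces $C$ to be cylindrical along $L$. Indeed, for every $\bx \in C$ and every $\ell \in L$, both $\bx + \ell$ and $\bx - \ell$ lie in $C$, so $C + L = C$. Let $\pi \colon \RR^d \to L^\perp$ denote the orthogonal projection and set $C' := C | L^\perp = \pi(C) \subseteq L^\perp$, so that $C = C' + L$. Openness of $\pi$ yields $\pi(\bw) \in \inter(C')$, so the quantity $\ca(C',\pi(\bw))$ on the right-hand side is well-defined.

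Next I would unfold the second definition in~\eqref{eqn:ca-definition}. Using $C = C' + L$, the fact that $\bw - \pi(\bw) \in L$, and $\lambda L = L = -L$ for $\lambda > 0$, one computes
\[
\bw - C = (\pi(\bw) - C') + L \quad\text{and}\quad \lambda(C - \bw) = \lambda(C' - \pi(\bw)) + L.
\]
Consequently, the inclusion $\bw - C \subseteq \lambda(C - \bw)$ that defines $\ca(C,\bw) \leq \lambda$ is equivalent to
\[
(\pi(\bw) - C') + L \subseteq \lambda(C' - \pi(\bw)) + L.
\]
The final step is to strip off the summand $L$ on both sides: since $\pi(\bw) - C'$ and $\lambda(C' - \pi(\bw))$ are contained in $L^\perp$, and $\pi$ acts as the identity on $L^\perp$ while annihilating $L$, the displayed inclusion is equivalent to $\pi(\bw) - C' \subseteq \lambda(C' - \pi(\bw))$, i.e., to $\ca(C',\pi(\bw)) \leq \lambda$. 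Taking the minimum over $\lambda \geq 1$ on both sides yields the claimed identity.

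The only subtle point — the ``hard part'' such as it is — is this last equivalence of inclusions: in general $A + L \subseteq B + L$ does not force $A \subseteq B$, and the reduction relies on the placement $A, B \subseteq L^\perp$, which holds in our setting by construction of $C'$. Everything else is bookkeeping with orthogonal decompositions, and no convex-analytic subtleties arise beyond the initial observation that $\rec(C) = L$ being a subspace already forces cylindricity.
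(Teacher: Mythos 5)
Your proof is correct and follows essentially the same route as the paper's: decompose $C = (C|L^\perp) + L$ and $\bw = (\bw|L^\perp) + \by$ with $\by \in L$, then cancel the $L$-summand in the defining inclusion of~\eqref{eqn:ca-definition}. The only difference is that you explicitly justify the cancellation of $L$ (via the placement of both sides in $L^\perp$ and applying the projection), a step the paper's computation leaves implicit.
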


\begin{proof}
For the sake of brevity, we write $C_L = C | L^\perp$ and $\bw_L = \bw | L^\perp$.
Since $L$ is a subspace, $C = C_L + L$ and $\bw = \bw_L + \by$, for some $\by \in L$.
Thus
\begin{align*}
\ca(C,\bw) &= \min\left\{ \lambda \geq 1 : (\bw_L + \by) - (C_L + L) \subseteq \lambda \left((C_L + L) - (\bw_L + \by) \right) \right\} \\
&= \min\left\{ \lambda \geq 1 : \bw_L - C_L \subseteq \lambda (C_L - \bw_L) \right\} = \ca(C | L^\perp, \bw | L^\perp) \, .\qedhere
\end{align*}
\end{proof}

\section{Lonely Runners and Lonely Runner Zonohedra}
\label{sect:lonelyrunners}

Let $\bn \in \ZZ_{>0}^d$ be a \emph{velocity vector}, by which we mean its $d \geq 2$ entries are distinct and relatively prime.
For such an~$\bn$, we denote by
\[
\gap(\bn) := \max_{\beta \in \RR} \,\,\,\min_{1 \leq i \leq d} \|\beta n_i\|_\ZZ = \max_{\beta \in [0,1]} \,\,\,\min_{1 \leq i \leq d} \|\beta n_i\|_\ZZ
\]
the \emph{gap of loneliness} of~$\bn$.
We can restrict to the range $\beta \in [0,1]$ since the $n_i$ are all integers, and we
can define $\gap(\bn)$ by a maximum because it is attained at some $\beta = \frac{a}{n_i +
n_j}$, for some $i \neq j$ and some integer $0 < a < n_i + n_j$ (see~\cite[Theorem~6]{czerwinskigrytczuk2008invisible}).
Observe that we always have $\gap(\bn) > 0$ by using an irrational multiple~$\beta$.
The original version of Wills' conjecture is as follows. 

\begin{lrc}
For any velocity vector $\bn \in \ZZ^d_{>0}$ we have $\gap(\bn) \ge \frac{ 1 }{ d+1 }$.  
\end{lrc}

The well-known (and easily seen to be equivalent) visibility version is:

\begin{lrc}
For any velocity vector $\bn \in \ZZ^d_{>0}$, there exists $\bm \in \ZZ^d$ such that  
\begin{equation}\label{eq:visibilitylrc}
  \bm \in \RR \bn + \left[\tfrac{ 1 }{ d+1 },\tfrac{ d }{ d+1 }\right]^d \,.
\end{equation}
\end{lrc}
We seek to understand the gap of loneliness geometrically in terms of
the lonely runner zonohedron.
This will allow us to reformulate the Lonely Runner Conjecture via the coefficient of asymmetry.
To this end, we note that \eqref{eq:visibilitylrc} is equivalent to
\[
  \bm \in \RR \bn + \tfrac 1 {d+1} \bone + \tfrac{ d-1 }{ d+1 } [0,1]^d = \tfrac 1 {d+1} \bone +
\tfrac{ d-1 }{ d+1 } \, Z(\bn) \, ,
\]
which yields the reformulation
\begin{lrc}
For any velocity vector $\bn \in \ZZ^d_{>0}$ the zonohedron $\tfrac 1 {d+1} \bone + \tfrac{
d-1 }{ d+1 } \, Z(\bn)$ contains a lattice point.
\end{lrc}

The lonely runner zonohedron~$Z(\bn)$ is symmetric about any point on the line~$\frac12 \bone + \RR \bn$, in particular about $\bc_{\bn} := \frac12(\bone + \bn)$.
The following characterization allows for a geometric interpretation of the gap of loneliness $\gap(\bn)$ in terms of~$Z(\bn)$.

\begin{proposition}
\label{prop:lrc-gap-loneliness-interpretation}
Let $\bn \in \ZZ^d_{>0}$ be a velocity vector and let $0 < \gamma \leq \frac12$.
The following statements are equivalent:
\begin{enumerate}[(i)]
 \item There is some $\beta \in \RR$ such that $\|\beta n_i \|_\ZZ \geq \gamma$, for every $1 \leq i \leq d$.
 \item There is some $\bw \in \inter(Z(\bn)) \cap \ZZ^d$ such that $\bw \in \bc_\bn + (1 - 2 \gamma) \left( Z(\bn) - \bc_\bn \right)$.
\end{enumerate}
\end{proposition}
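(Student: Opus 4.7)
The plan is to unravel the geometric set in (ii) into a concrete Minkowski-sum description and observe that this slab captures the nearest-integer condition of (i) coordinatewise.

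First I would simplify the set $\bc_\bn + (1-2\gamma)(Z(\bn)-\bc_\bn)$. Since $\RR\bn$ is the lineality space of $Z(\bn)$ and $Z(\bn)$ is symmetric about $\bc_\bn = \frac12(\bone + \bn)$, after translating by $-\bc_\bn$ (absorbing the $\frac12 \bn$ into $\RR\bn$) and scaling by $1-2\gamma$, one gets the centrally symmetric slab $\RR\bn + [-\tfrac12+\gamma, \tfrac12-\gamma]^d$. Adding back $\bc_\bn = \frac12\bone + \frac12\bn$ and again absorbing $\frac12\bn$ into the line $\RR\bn$ yields
\[
\bc_\bn + (1-2\gamma)(Z(\bn)-\bc_\bn) \;=\; \RR\bn + [\gamma, 1-\gamma]^d.
\]
Since $\gamma > 0$, we have $[\gamma,1-\gamma] \subseteq (0,1)$, so this slab already lies inside $\inter(Z(\bn)) = \RR\bn + (0,1)^d$. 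Thus (ii) is equivalent to the existence of $\bw \in \ZZ^d \cap (\RR\bn + [\gamma, 1-\gamma]^d)$, and the interiority requirement is automatic.

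Next I would verify that this reformulated (ii) is equivalent to (i). The definitional identity $\|x\|_\ZZ \geq \gamma \iff x \bmod 1 \in [\gamma, 1-\gamma]$ applied coordinatewise gives that (i) holds iff there exist $\beta \in \RR$ and $\bm \in \ZZ^d$ with $\beta\bn - \bm \in [\gamma, 1-\gamma]^d$ (take $m_i = \lfloor \beta n_i \rfloor$). Exploiting the symmetry $\bone - [\gamma,1-\gamma]^d = [\gamma,1-\gamma]^d$, this is in turn equivalent to the lattice point $\bw := \bm + \bone$ satisfying $\bw - \beta\bn \in [\gamma, 1-\gamma]^d$, which is exactly the reformulated (ii). The reverse implication reads the coordinates of $\bw - \beta\bn \in [\gamma,1-\gamma]^d$ directly: writing $\beta n_i = w_i - t_i$ with $w_i \in \ZZ$ and $t_i \in [\gamma,1-\gamma]$, the two nearest integers to $\beta n_i$ are $w_i$ and $w_i - 1$ at distances $t_i$ and $1-t_i$, both $\geq \gamma$, so $\|\beta n_i\|_\ZZ \geq \gamma$.

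I do not expect a real obstacle here; the entire content of the proposition is the identification
\[
\bc_\bn + (1-2\gamma)(Z(\bn)-\bc_\bn) \;=\; \RR\bn + [\gamma, 1-\gamma]^d,
\]
after which both (i) and (ii) are transparently the same coordinatewise statement. The only minor care needed is the sign/translation bookkeeping to match the $\beta\bn - \bm$ of the loneliness side with the $\bw - \beta\bn$ of the visibility side, which is resolved by the $t \mapsto 1-t$ symmetry of the interval $[\gamma,1-\gamma]$.
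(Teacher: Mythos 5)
Your argument is essentially the paper's proof, repackaged: the paper carries out exactly the same coordinate bookkeeping (finding integers $z_i$ with $z_i+\gamma \le \beta n_i \le z_i+1-\gamma$ in one direction, and reading off $\beta$ from a Minkowski-sum representation of $\bw$ in the other), just without isolating the clean set identity $\bc_\bn + (1-2\gamma)(Z(\bn)-\bc_\bn) = \RR\bn + [\gamma,1-\gamma]^d$ that you state up front. For $0<\gamma<\tfrac12$ your identity is correct, the observation that $[\gamma,1-\gamma]\subseteq(0,1)$ makes interiority automatic, and the coordinatewise translation (including the $t\mapsto 1-t$ shift to pass from $\beta\bn-\bm$ to $\bw-\beta\bn$) is sound.

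There is, however, one genuine defect: your set identity fails at the endpoint $\gamma=\tfrac12$. There $1-2\gamma=0$, and scaling a set by $0$ collapses it to $\{\bzero\}$, not to its lineality space; so the left-hand side is the single point $\{\bc_\bn\}$ while your right-hand side $\RR\bn+\tfrac12\bone$ is a whole line. The step ``scaling by $1-2\gamma$ gives the slab $\RR\bn+[-\tfrac12+\gamma,\tfrac12-\gamma]^d$'' is simply false for $\gamma=\tfrac12$. The final equivalence still happens to hold in that case (condition (ii) becomes $\bc_\bn=\tfrac12(\bone+\bn)\in\ZZ^d$, i.e.\ all $n_i$ odd, and one checks using $\gcd(n_1,\dots,n_d)=1$ that (i) with $\gamma=\tfrac12$ forces the same conclusion), but your proof as written does not establish it. The paper treats $\gamma=\tfrac12$ as a separate, short special case before running the generic argument; you should do the same, or restrict your identity to $\gamma<\tfrac12$ and handle the endpoint directly.
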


\begin{proof}
We first prove the implication $(i) \Longrightarrow (ii)$:
If $\gamma = \frac12$, then $\| \beta n_i \|_\ZZ = \frac12$, which means that $n_i$ is odd, for every $1 \leq i \leq d$.
Consequently, $\bw = \bc_\bn = \frac12 (\bone + \bn)$ is a lattice point satisfying~(ii).
Now let $\gamma < \frac12$.
By assumption, there are integers $z_i \in \ZZ$ such that $z_i + \gamma \leq \beta n_i \leq z_i + 1 - \gamma$, for $1 \leq i \leq d$.
Hence, there are $\alpha_1,\dots,\alpha_d \in [0,1]$ such that $\beta n_i = z_i + \gamma + (1-2\gamma) \alpha_i$, for $1 \leq i \leq d$.
Since $\gamma + (1-2\gamma) \alpha_i \in [\gamma, 1 - \gamma]$, we write $\bw =
-(z_1,\dots,z_d) \in \ZZ^d$ and obtain 
\begin{align*}
\bw - \bc_\bn \in [\gamma,1-\gamma]^d - \beta \bn - \bc_\bn &= (1 - 2 \gamma) \left( [0,1]^d - \tfrac12 \bone \right) - (\beta + \tfrac12) \bn \\
&= (1 - 2 \gamma) \left( [0,1]^d + \left(\tfrac12 - \tfrac{\beta + \tfrac12}{1 - 2 \gamma}\right) \bn - \bc_\bn \right) \subseteq (1 - 2 \gamma) \left( Z(\bn) - \bc_\bn \right) \, .
\end{align*}
Because $\gamma > 0$, we have $1-2\gamma < 1$, and thus $\bw \in \bc_\bn + (1 - 2 \gamma) \left( Z(\bn) - \bc_\bn \right) \subseteq \inter(Z(\bn))$ as claimed.

Now, we prove $(ii) \Longrightarrow (i)$:
Let $\bw \in \ZZ^d$ with $\bw \in \bc_{\bn} + (1 - 2 \gamma) \left( Z(\bn) - \bc_{\bn} \right) = 2 \gamma \, \bc_{\bn} + (1 - 2 \gamma) Z(\bn)$.
Hence, there exist $\alpha_0 \in \RR$ and $\alpha_1,\dots,\alpha_d \in [0,1]$ such that
\[
\bw = \gamma (\bone + \bn) + (1 - 2 \gamma) \left( \alpha_0 \bn + \sum_{i=1}^d \alpha_i \be_i \right)
\]
and thus
\[
  \quad w_i = \gamma + (1 - 2 \gamma) \alpha_i + (\gamma + (1 - 2 \gamma) \alpha_0) n_i
\qquad \text{ for all } i \in [d].
\]
Writing $\beta = \gamma + (1 - 2 \gamma) \alpha_0$ and noting that $\gamma + (1 - 2
\gamma) \alpha_i \in [\gamma,1-\gamma]$, we obtain $\| \beta n_i \|_{\ZZ} \geq \gamma$, for every $i \in [d]$, because~$w_i \in \ZZ$.
\end{proof}

Writing
\[
\lambda(C,\bw) := \min\left\{ \lambda \geq 0 : \bw - \bc \in \lambda (C - \bc) \right\} \, ,
\]
for any closed convex set $C \subseteq \RR^d$ that is symmetric about~$\bc \in \RR^d$, Proposition~\ref{prop:lrc-gap-loneliness-interpretation} implies that
\begin{align}
\min_{\bw \in \inter(Z(\bn)) \cap \ZZ^d} \lambda(Z(\bn),\bw) = 1 - 2 \gap(\bn) \,.\label{eqn:min-lambda-gap}
\end{align}
Now, let $L = \rec(Z(\bn)) = \RR \bn$.
Then, for every $\bw \in \inter(Z(\bn))$, we have $\ca(Z(\bn),\bw) = \ca(Z(\bn)|L^\perp,\bw|L^\perp)$ in view of Proposition~\ref{prop:ca-linear-recession-cone}, and moreover for every $0 < \gamma \leq \frac12$ 
\begin{align}
\bw - \bc_\bn \in  (1 - 2 \gamma) \left( Z(\bn) - \bc_\bn \right) \qquad \Longleftrightarrow \qquad \bw|L^\perp - \bc_\bn|L^\perp \in (1 - 2 \gamma) \left( Z(\bn)|L^\perp - \bc_\bn|L^\perp \right) \,.\label{eqn:zonohedron-projection-scalar}
\end{align}
Because the projection $Z(\bn)|L^\perp$ is a zonotope that is symmetric about $\bc_\bn|L^\perp$, the right hand side of~\eqref{eqn:zonohedron-projection-scalar} can be understood as a bound on the length of the vector $\bw|L^\perp - \bc_\bn|L^\perp$ measured by the norm that is induced by $Z(\bn)|L^\perp - \bc_\bn|L^\perp$.
By virtue of Proposition~\ref{prop:ca-versus-minkowski-norm} and~\eqref{eqn:min-lambda-gap} this translates into
\begin{align}
\min_{\bw \in \inter(Z(\bn)) \cap \ZZ^d} \ca(Z(\bn),\bw) = \frac{1+(1 - 2 \gap(\bn))}{1-(1 - 2 \gap(\bn))} = \frac{1}{\gap(\bn)} - 1 \,.\label{eqn:ca-gap-relation}
\end{align}
Since the Lonely Runner Conjecture states that $\gap(\bn) \geq \frac{1}{d+1}$ for every velocity vector $\bn \in \ZZ_{>0}^d$, we established the desired reformulation in terms of the coefficient of asymmetry.

\begin{lrc}
Let $\bn \in \ZZ^d_{>0}$ be a velocity vector.
Then there exists an interior lattice point $\bw \in \inter(Z(\bn)) \cap \ZZ^d$ such that $\ca(Z(\bn),\bw) \leq d$.
\end{lrc}

We conclude with a localization version of this reformulation.
A natural choice for a Lonely Runner \emph{Zonotope} would be $Z_\bn := \Zon(\be_1,\dots,\be_d,\bn)$, which has center~$\bc_\bn$ (we study this zonotope in more detail in Section~\ref{sect:ehrhart}).
But in general, we cannot just replace the zonohedron~$Z(\bn)$ above by~$Z_\bn$ and obtain an equivalent version, because the integral translates $k \bn + \bc_\bn + \tfrac{d-1}{d+1}(Z_\bn - \bc_\bn)$, $k \in \ZZ$, may not cover~$\bc_\bn + \tfrac{d-1}{d+1}(Z(\bn) - \bc_\bn)$, so that we may miss lattice points that are present in the formulation with respect to~$Z(\bn)$.
One can check that this happens, for example, for the velocity vector $\bn = (2,5)^\intercal$.

However, taking a suitable dilate of~$\bn$ as the last generator will do.
For instance, for $T_\bn := \Zon(\be_1,\ldots,\be_d,3\bn)$ we have
\[
  \bc_\bn + \tfrac{ d-1 }{ d+1 } ( Z(\bn) - \bc_\bn ) = \bigcup_{ k \in \ZZ } \left( k \bn + \bc_\bn + \tfrac{ d-1 }{ d+1
} ( T_\bn - \bc_\bn ) \right) \,.
\]
Thus the Lonely Runner Conjecture is equivalent to the zonotope 
$\tfrac 1 {d+1} \bone + \tfrac{ d-1 }{ d+1 } \, T_\bn$ containing a lattice point, for 
any velocity vector $\bn \in \ZZ^d_{>0}$. In terms of the coefficient of asymmetry, this
reads as follows. 

\begin{lrc}
Let $\bn \in \ZZ^d_{>0}$ be a velocity vector.
Then there exists an interior lattice point $\bw \in \inter(T_\bn) \cap \ZZ^d$ such that $\ca(T_\bn,\bw) \leq d$.
\end{lrc}

\section{Simultaneous Diophantine Approximation Problems and Lonely Rabbits}
\label{sect:lonelyrabbitsrunners}

A classical result in simultaneous Diophantine approximation theory is \emph{Dirichlet's Approximation
Theorem} (see, e.g.,~\cite[\S11.12]{hardywright1979anintroduction}).
It states that for any real numbers $\alpha_1,\ldots,\alpha_d \in \RR$ there are integers $Q,P_1,\ldots,P_d \in \ZZ$ with $Q > 0$ such that
\[
|Q \alpha_i - P_i| \leq \frac{1}{Q^{\frac{1}{d}}},\quad\textrm{ for all }\quad 1 \leq i \leq d.
\]
Motivated by its utility for our proof of Theorem~\ref{thm:main-ca-precise-bound-zonotopes}, we are interested in an \emph{inhomogeneous} variant of Dirichlet's theorem.
More precisely, we define $\delta_d > 0$ as the smallest positive number such that for any $\alpha_1,\ldots,\alpha_d \in \RR \setminus \ZZ$, there are integers $Q,P_1,\ldots,P_d \in \ZZ$ with
\begin{align}
\left|Q \alpha_i - P_i - \frac12\right| \leq \delta_d,\quad\textrm{ for all }\quad 1 \leq i \leq d.\label{eqn:inhomDA}
\end{align}
Since we assume that all the $\alpha_i$ are non-integral, we clearly have $\delta_d < \frac12$, but of course we are interested in a much better bound, or even the exact value of $\delta_d$.
For every~$d$, we have $\delta_d \leq \delta_{d+1}$, which can be seen by just repeating one of the $\alpha_j$ from an extremal set of numbers attaining~$\delta_d$.

It turns out that the inhomogeneous Diophantine approximation problem of determining $\delta_d$ can be phrased as a discrete variant of the Lonely Runner Conjecture.
To see this, write $\rab(d) := \frac12 - \delta_d$.
Then the condition
\[
\exists \, Q,P_1,\ldots,P_d \in \ZZ \quad \textrm{ such that } \quad \max_{1 \leq i \leq d} \left|Q \alpha_i - P_i - \frac12\right| \leq \delta_d = \frac12 - \rab(d)
\]
is equivalent to saying that
\[
\exists \, Q \in \ZZ \quad \textrm{ such that } \quad \|Q\alpha_i\|_\ZZ \geq \rab(d) \quad \textrm{ for every } \quad 1 \leq i \leq d \,.
\]
Thus, the constant $\rab(d)$ can be compactly written as~\eqref{eqn:psi-d}.
The first $20$ values of $w(d)$, $\rab(d)$ and~$\delta_d$ are written down in Table~\ref{tbl:psi-d-values}.
\begin{table}[ht]
\centering
\captionsetup{position=below}
\begin{tabular}{|c||c|c|c|c|c|c|c|c|c|c|c|c|c|c|c|c|c|c|c|c|}
\hline $d$      & $1$           & $2$           & $3$           & $4$            & $5$            & $6$            & $7$            & $8$            & $9$            & $10$           & $11$           & $12$           & $13$ & $14$ & $15$ & $16$ & $17$ & $18$ & $19$ & $20$           \\ \hline
$w(d)$ & $3$ & $5$ & $8$ & $12$ & $18$ & $24$ & $30$ & $36$ & $42$ & $48$ & $60$ & $60$ & $66$ & $72$ & $90$ & $90$ & $90$ & $96$ & $120$ & $120$ \\ \hline
$\rab(d)$ & $\frac{1}{3}$ & $\frac{1}{5}$ & $\frac{1}{8}$ & $\frac{1}{12}$ & $\frac{1}{18}$ & $\frac{1}{24}$ & $\frac{1}{30}$ & $\frac{1}{36}$ & $\frac{1}{42}$ & $\frac{1}{48}$ & $\frac{1}{60}$ & $\frac{1}{60}$ & $\frac{1}{66}$ & $\frac{1}{72}$ & $\frac{1}{90}$ & $\frac{1}{90}$ & $\frac{1}{90}$ & $\frac{1}{96}$ & $\frac{1}{120}$ & $\frac{1}{120}$ \\ \hline
$\delta_d$ & $\frac{1}{6}$ & $\frac{3}{10}$ & $\frac{3}{8}$ & $\frac{5}{12}$ & $\frac{4}{9}$ & $\frac{11}{24}$ & $\frac{7}{15}$ & $\frac{17}{36}$ & $\frac{10}{21}$ & $\frac{23}{48}$ & $\frac{29}{60}$ & $\frac{29}{60}$ & $\frac{16}{33}$ & $\frac{35}{72}$ & $\frac{22}{45}$ & $\frac{22}{45}$ & $\frac{22}{45}$ & $\frac{47}{96}$ & $\frac{59}{120}$ & $\frac{59}{120}$ \\ \hline
\end{tabular}
\caption{The first $20$ values of $w(d)$ and of the approximation constants $\rab(d)$ and $\delta_d$.}
\label{tbl:psi-d-values}
\end{table}

The definition~\eqref{eqn:psi-d-w-d} of the parameter $w(d)$ suggests what an extremal set of numbers $0 < \alpha_1,\ldots,\alpha_d < 1$ attaining~$\rab(d)$ looks like.
To this end, given a vector $\alpha = (\alpha_1,\ldots,\alpha_d) \in (\RR \setminus \ZZ)^d$ of non-integers, we write
\[
\psi(\alpha) := \sup_{Q \in \ZZ} \,\,\,\min_{1 \leq i \leq d} \| Q \alpha_i \|_\ZZ \, .
\]
Furthermore, for $z \in \NN$, we write
\[
\Phi(z) := \left\{ a \in \ZZ : 1 \leq a \leq \frac{z}{2}, \ \gcd(a,z) = 1 \right\} .
\]
Extremal vectors have been identified by Cusick~\cite[Proof of Lemma~1]{cusick1972simultaneous}.
We give the arguments here for completeness and because, later in Section~\ref{sect:deep-parallelepipeds}, we turn those vectors into lattice zonotopes that attain the bound in Theorem~\ref{thm:main-ca-precise-bound-zonotopes}.

\begin{proposition}
\label{prop:psi-d-extremal}
Let the prime factorization of an integer $z \in \NN$ be given by $z = \prod_{i=1}^h p_i^{c_i}$, where $p_1 < p_2 < \dots < p_h$ and $c_i \geq 1$.
Further, write $k = \frac12 \phi(z)$ and let $a_1, a_2, \ldots, a_k$ be the elements of $\Phi(z)$ labeled in increasing order.
We define $\alpha^z \in (\QQ \setminus \ZZ)^{k + h(z)}$ by
\[
\alpha^z_i = \frac{a_i}{z}\quad \textrm{ for }\quad 1 \leq i \leq k \quad \textrm{ and } \quad \alpha^z_{k+j} = \frac{1}{p_j} \quad \textrm{ for } 1 \leq j \leq h(z).
\]
Then $\psi(\alpha^z) = \frac{1}{z}$.
\end{proposition}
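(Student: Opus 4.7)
The plan is to sandwich the supremum $\psi(\alpha^z) = \sup_{Q \in \ZZ} \min_i \|Q \alpha^z_i\|_\ZZ$ between $\tfrac{1}{z}$ and $\tfrac{1}{z}$, exploiting that the coordinates of $\alpha^z$ are tailored so that $Q = 1$ is already a good witness while no other $Q \in \ZZ$ can beat it.

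For the lower bound $\psi(\alpha^z) \ge \tfrac{1}{z}$ I simply evaluate at $Q = 1$. The first block contributes $\|a_i/z\|_\ZZ = a_i/z \ge 1/z$ thanks to $1 \le a_i \le z/2$, and the second block contributes $\|1/p_j\|_\ZZ = 1/p_j \ge 1/z$ because every $p_j$ divides $z$. So the minimum over coordinates is already at least $\tfrac{1}{z}$, and no further case analysis is needed here.

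For the upper bound I need to show that every $Q \in \ZZ$ admits an index $i$ with $\|Q\alpha^z_i\|_\ZZ \le \tfrac{1}{z}$. The natural dichotomy is on $d' := \gcd(Q,z)$. If $d' > 1$, pick any prime $p_j$ dividing $d'$; it is automatically one of the primes in the factorization of $z$, and the coordinate $\alpha^z_{k+j} = 1/p_j$ gives $\|Q/p_j\|_\ZZ = 0$. If $d' = 1$, then $Q$ is a unit modulo $z$, and I let $b$ be the unique representative of $Q^{-1}$ in the symmetric residue system $\{-\lfloor z/2\rfloor,\ldots,\lfloor z/2\rfloor\}$, so that $Q b \equiv \pm 1 \pmod{z}$. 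I then argue that $|b|$ belongs to $\Phi(z)$, hence equals some $a_i$, and conclude $\|Q a_i / z\|_\ZZ = \tfrac{1}{z}$.

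The only real technical point is verifying that $|b| \in \Phi(z)$: coprimality $\gcd(|b|,z) = 1$ is inherited from $\gcd(Q^{-1},z) = 1$, and the inequality $|b| \le z/2$ holds by construction, the single borderline case $|b| = z/2$ being incompatible with coprimality unless $z = 2$. A small amount of bookkeeping covers the prime-$z$ case: then $h(z) = 0$ so the second block is empty, but $d' > 1$ already forces $z \mid Q$ and the first block becomes uniformly integral, again giving $\|Q\alpha^z_i\|_\ZZ = 0$. The case $Q = 0$ is trivially handled since the minimum vanishes. Combining the two bounds yields $\psi(\alpha^z) = \tfrac{1}{z}$.
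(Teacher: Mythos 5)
Your proof is correct and follows essentially the same route as the paper: the lower bound via $Q=1$ using $\tfrac1z \le \alpha^z_i \le \tfrac{z-1}{z}$, and the upper bound by splitting on $\gcd(Q,z)$ and, in the coprime case, locating the inverse of $Q$ modulo $z$ (up to sign) inside $\Phi(z)$. Your use of the symmetric residue system is just a cosmetic repackaging of the paper's two cases $P\le z/2$ and $P>z/2$, and your explicit treatment of the prime-$z$ case (where the second block is empty but $z\mid Q$ forces all first-block coordinates to be integral) is a small point the paper glosses over.
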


\begin{proof}
First, since $\frac{1}{z} \leq \alpha^z_i \leq \frac{z-1}{z}$, for every $1 \leq i \leq k+h(z)$, we clearly have $\psi(\alpha^z) \geq \frac{1}{z}$.
For the reverse inequality, we need to show that for every integer $Q \in \ZZ$ there is some $1 \leq i \leq k+h(z)$, such that $\|Q \alpha^z_i\|_\ZZ \leq \frac{1}{z}$.
This is certainly true for all~$Q$ with $\gcd(Q,z) > 1$, because of $\alpha^z_{k+j}=\frac{1}{p_j}$, for $1 \leq j \leq h(z)$.

Thus, let $Q \in \ZZ$ be such that $\gcd(Q,z) = 1$.
Then $Q$ has a multiplicative inverse $P \in \{1,\ldots,z-1\}$ modulo~$z$, that is, $Q P \equiv 1 \!\!\mod\,z$.
If $1 \leq P \leq \frac{z}{2}$, then $P \in \Phi(z)$.
Thus, $\frac{P}{z}$ equals one of the first~$k$ entries of~$\alpha$, and satisfies $\|Q \frac{P}{z}\|_\ZZ = \frac{1}{z}$.
If $\frac{z}{2} < P < z$, then $P' = z - P \in \Phi(z)$ and thus $\frac{P'}{z}$ equals one of the first~$k$ entries of~$\alpha$, and satisfies $\|Q \frac{P'}{z}\|_\ZZ = \|\frac{-1}{z}\|_\ZZ = \frac{1}{z}$.
\end{proof}

Now, if we take $z = w(d)$, then the vector $\alpha^z$ from Proposition~\ref{prop:psi-d-extremal} has dimension $\frac12 \varphi(z) + h(z) \leq d$.
In the case that $m = d - \frac12 \varphi(z) - h(z) \geq 1$, we repeat the first coordinate $\frac{1}{z}$ of $\alpha^z$ $m$ times and get a vector $\bar\alpha^z \in (\QQ \setminus \ZZ)^d$ with $\psi(\bar\alpha^z) = \frac{1}{w(d)} = \rab(d)$.

\section{Deep Lattice Points in Lattice Parallelepipeds}
\label{sect:deep-parallelepipeds}

In this section, we aim to prove Theorem~\ref{thm:main-ca-precise-bound-zonotopes} for the case of lattice parallelepipeds.
We use the simultaneous Diophantine approximation problem behind the constant~$\delta_d$
from Section~\ref{sect:lonelyrabbitsrunners} in order to adjust the approach of
Pikhurko~\cite{pikhurko2001lattice} which established the existence of deep interior lattice points in lattice \emph{simplices}.

\begin{theorem}
\label{thm:ca-parallelepipeds}
Let $P \subseteq \RR^d$ be a lattice parallelepiped such that $\inter(P) \cap \ZZ^d \neq \emptyset$.
Then there exists an interior lattice point $\bw \in \inter(P) \cap \ZZ^d$ such that
\[
\ca(P,\bw) \leq \frac{\frac12 + \delta_d}{\frac12 - \delta_d} = w(d) - 1.
\]
\end{theorem}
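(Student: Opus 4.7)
My plan is to set up coordinates via the generating matrix of $P$ and then invoke the inhomogeneous Diophantine inequality built into $\delta_d$. First I would use Proposition~\ref{prop:ca-covariance-translation-scaling} to translate so that $P = [\bzero,\bv_1] + \dots + [\bzero,\bv_d]$, and let $V$ be the matrix with columns $\bv_1,\ldots,\bv_d$; this is invertible since $\inter(P) \neq \emptyset$. Writing $\bw = V\alpha$, Corollary~\ref{cor:ca-parallelepipeds} says that $\bw \in \inter(P)$ iff $\alpha \in (0,1)^d$, and in that case
\[
\ca(P,\bw) = \frac{\tfrac12 + M(\alpha)}{\tfrac12 - M(\alpha)}, \qquad M(\alpha) := \max_{1 \leq i \leq d}\left|\alpha_i - \tfrac12\right|,
\]
which is a strictly increasing function of $M(\alpha)$ on $[0,\tfrac12)$. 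So the goal reduces to exhibiting an interior lattice point $\bw$ whose coordinate vector $\alpha = V^{-1}\bw$ satisfies $M(\alpha) \leq \delta_d$.

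The key observation is that $\bw \in \ZZ^d$ iff $\alpha \in \Lambda := V^{-1}\ZZ^d$, and since $V$ has integer entries, $V\ZZ^d \subseteq \ZZ^d$ and hence $\ZZ^d \subseteq \Lambda$. Next I would take the hypothesized interior lattice point $\bw_0 \in \inter(P) \cap \ZZ^d$ and set $\alpha^{(0)} := V^{-1}\bw_0 \in \Lambda \cap (0,1)^d$. Each entry of $\alpha^{(0)}$ then lies in $(0,1)$ and is in particular non-integer, so the definition of $\delta_d$ in Section~\ref{sect:lonelyrabbitsrunners} supplies integers $Q, P_1, \ldots, P_d$ with
\[
\left|Q\alpha^{(0)}_i - P_i - \tfrac12\right| \leq \delta_d \quad\text{for all } 1 \leq i \leq d.
\]

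Setting $\beta := Q\alpha^{(0)} - (P_1,\ldots,P_d)^\intercal$, I would observe that $\beta \in \Lambda$ (because $Q\alpha^{(0)} \in \Lambda$ as $\Lambda$ is a lattice, and $\ZZ^d \subseteq \Lambda$) and $\beta \in (0,1)^d$ (because $\delta_d < \tfrac12$). Therefore $\bw := V\beta$ is an interior lattice point of $P$ with $M(\beta) \leq \delta_d$, and substituting into the formula above yields
\[
\ca(P,\bw) \leq \frac{\tfrac12 + \delta_d}{\tfrac12 - \delta_d} = w(d) - 1,
\]
where the final equality uses $\delta_d = \tfrac12 - \rab(d) = \tfrac12 - 1/w(d)$.

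The only genuine content in this plan is the insight that an inhomogeneous Diophantine inequality for the coordinates of an arbitrary interior lattice point translates into a \emph{new} interior lattice point of $P$, which rests entirely on the inclusion $\ZZ^d \subseteq V^{-1}\ZZ^d$. Once this is seen, the heavy lifting is done by the definition of $\delta_d$ and by Corollary~\ref{cor:ca-parallelepipeds}, and I do not anticipate a serious obstacle beyond being careful that the Diophantine approximation hypothesis really does apply to $\alpha^{(0)}$ (which it does, since its entries are non-integer).
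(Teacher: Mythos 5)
Your proposal is correct and is essentially the paper's own argument: starting from any interior lattice point, apply the inhomogeneous Diophantine inequality defining $\delta_d$ to its coefficient vector and observe that the resulting point $Q\bw - \sum_i P_i \bv_i$ is again an interior lattice point with all coefficients within $\delta_d$ of $\tfrac12$. Your lattice-theoretic phrasing via $\ZZ^d \subseteq V^{-1}\ZZ^d$ is just a repackaging of the paper's direct integrality observation.
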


\begin{proof}
Write $P = [\bzero,\bv_1] + \dots + [\bzero,\bv_d] \subseteq \RR^d$ for suitable
generators $\bv_1,\ldots,\bv_d \in \ZZ^d$, and let $\bw = \sum_{i=1}^d \alpha_i \bv_i \in \inter(P) \cap \ZZ^d$ be an interior lattice point in~$P$.
In particular this means that $0 < \alpha_1,\ldots,\alpha_d < 1$.
In view of Corollary~\ref{cor:ca-parallelepipeds}, the coefficient of asymmetry $\ca(P,\bw)$ corresponds to $\max_{1 \leq i \leq d}|\alpha_i - \frac12|$.
If this maximum is not yet itself bounded above by~$\delta_d$, we ``jump'' inside~$P$ with the help of the inhomogeneous problem in~\eqref{eqn:inhomDA}.
More precisely, we find integers $Q,P_1,\ldots,P_d \in \ZZ$ such that $|Q \alpha_i - P_i - \frac12| \leq \delta_d$, for all $1 \leq i \leq d$.
We claim that the lattice point
\[
\bw' = Q \bw - \sum_{i=1}^d P_i \bv_i = \sum_{i=1}^d (Q \alpha_i - P_i) \bv_i
\]
lies in the interior of~$P$ as well and satisfies $\ca(P,\bw') \leq \frac{\frac12 + \delta_d}{\frac12 - \delta_d}$.
The former holds, since we must have $0 < Q \alpha_i - P_i < 1$, because $\delta_d < \frac12$.
The latter holds by Corollary~\ref{cor:ca-parallelepipeds}, as $\max_{1 \leq i \leq d}|Q \alpha_i - P_i - \frac12| \leq \delta_d$.
\end{proof}

Inspired by the examples in Proposition~\ref{prop:psi-d-extremal} that attain~$w(d)$, we construct, for each integer $z \geq 3$, a lattice parallelepiped, all of whose interior lattice points have coefficient of asymmetry equal to~$z-1$.

\begin{proposition}
\label{prop:extremal-parallelepipeds}
For an integer $z \geq 3$, we write $k=\frac12\phi(z)$ and $d(z) = k + h(z)$.
Let $\alpha^z \in (\QQ \setminus \ZZ)^{d(z)}$ be defined as in Proposition~\ref{prop:psi-d-extremal} and let $P^z = [\bzero,\bv^z] + [\bzero,\be_2] + \dots + [\bzero,\be_{d(z)}] \subseteq \RR^{d(z)}$ be the lattice parallelepiped whose first generator is given by
\[
\bv^z := \left( z, \, z \, \alpha_2^z, \, \ldots, \, z \, \alpha_k^z, \, z \,
\alpha_{k+1}^z, \, \ldots, \, z \, \alpha_{k+h(z)}^z \right)^\intercal \in \ZZ^{d(z)}.
\]
There are $\phi(z)$ interior lattice points in~$P^z$ and each such $\bw \in \inter(P^z) \cap \ZZ^{d(z)}$ satisfies $\ca(P^z,\bw) = z-1$.
\end{proposition}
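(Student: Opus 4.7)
The plan is to parametrize the interior lattice points of $P^z$ by integers $Q$ coprime to $z$ and then translate the asymmetry computation into the Diophantine language of Proposition~\ref{prop:psi-d-extremal}. Every point of $P^z$ has a unique representation
\[
  \bw \;=\; \beta_1 \bv^z + \sum_{j=2}^{d(z)} \beta_j \be_j,
\]
and lies in the interior iff $\beta_j \in (0,1)$ for all $j$. Because $\be_j$ has vanishing first coordinate for $j \geq 2$, the relation $\bw_1 = \beta_1 z$ forces $\beta_1 = Q/z$ with $1 \leq Q \leq z-1$ for any interior lattice point, and then for $j \geq 2$ the condition $\beta_j = \bw_j - Q\alpha_j^z \in (0,1)$ with $\bw_j \in \ZZ$ determines $\bw_j$ uniquely as the integer in $(Q\alpha_j^z, Q\alpha_j^z+1)$, which exists precisely when $Q\alpha_j^z \notin \ZZ$.

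I would then verify that non-integrality of $Q\alpha_j^z$ for every $j \geq 2$ is equivalent to $\gcd(Q,z)=1$. For $2 \leq j \leq k$, $\alpha_j^z = a_j/z$ with $\gcd(a_j,z)=1$, so $Q\alpha_j^z \in \ZZ$ would force $z \mid Q$, which is impossible; for $j = k+l$, $\alpha_j^z = 1/p_l$, so $Q\alpha_j^z \in \ZZ$ iff $p_l \mid Q$. Hence interior lattice points are in bijection with the set $\{Q : 1 \leq Q \leq z-1,\ \gcd(Q,z)=1\}$, of cardinality $\phi(z)$, settling the first claim.

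For the coefficient of asymmetry, Corollary~\ref{cor:ca-parallelepipeds} yields $\ca(P^z,\bw) = (\tfrac12 + M)/(\tfrac12 - M)$ with $M = \max_{1 \leq i \leq d(z)} |\beta_i - \tfrac12|$. Since $a_1 = 1$ gives $\alpha_1^z = 1/z$ and therefore $\beta_1 = Q\alpha_1^z$, while $\beta_j = 1 - \{Q\alpha_j^z\}$ for $j \geq 2$, a short case analysis produces the uniform identity
\[
  \left|\beta_i - \tfrac12\right| \;=\; \tfrac12 - \|Q\alpha_i^z\|_\ZZ \qquad \text{for every } 1 \leq i \leq d(z),
\]
and hence $M = \tfrac12 - \min_i \|Q\alpha_i^z\|_\ZZ$.

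The final step is to show $\min_i \|Q\alpha_i^z\|_\ZZ = 1/z$ for every $Q$ coprime to $z$. The upper bound is exactly what the proof of Proposition~\ref{prop:psi-d-extremal} delivers: the inverse $P$ of $Q$ modulo $z$, reduced to $\Phi(z)$ by replacing it with $z - P$ if necessary, pinpoints some index $i \leq k$ with $\|Q\alpha_i^z\|_\ZZ = 1/z$. The matching lower bound follows from the observation that every $\|Q\alpha_i^z\|_\ZZ$ is a multiple of $1/z$ (because $p_l \mid z$ for each $l$ and the first $k$ entries already have denominator $z$), and positivity of each such value is exactly what $\gcd(Q,z)=1$ guarantees. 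Substituting $M = \tfrac12 - 1/z$ into the asymmetry formula gives $\ca(P^z,\bw) = z-1$. The main subtlety to handle cleanly is folding the anomalous coordinate $i=1$, whose $\beta_1$ comes from the $\bv^z$-component rather than a standard unit vector, into the same $\|Q\alpha_i^z\|_\ZZ$ framework as the remaining coordinates; this works out only because $a_1=1$ forces $\alpha_1^z = 1/z$ and $\beta_1 = Q\alpha_1^z$ exactly.
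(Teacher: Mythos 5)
Your proposal is correct and follows essentially the same route as the paper: interior lattice points are parametrized by residues coprime to $z$ (giving the count $\phi(z)$), the upper bound on $\max_i|\beta_i-\tfrac12|$ comes from the fact that all relevant quantities lie in $\tfrac1z\ZZ\setminus\ZZ$, and sharpness is obtained from the modular inverse of $Q$ exactly as in Proposition~\ref{prop:psi-d-extremal}. Your uniform reformulation via $\|Q\alpha_i^z\|_\ZZ$, including the treatment of the coordinate $i=1$ using $a_1=1$, is just a clean repackaging of the paper's case analysis.
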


\begin{proof}
Let $\bw = (w_1,w_2,\ldots,w_{d(z)})^\intercal \in \inter(P^z) \cap \ZZ^{d(z)}$ be an interior lattice point.
By construction, $a := w_1 \in \{1,2,\ldots,z-1\}$, and there are uniquely determined coefficients $\beta_2,\ldots,\beta_{d(z)} \in (0,1)$ such that
\begin{align}
\bw &= \frac{a}{z} \bv^z + \beta_2 \be_2 + \dots + \beta_{d(z)} \be_{d(z)} = \left(a , \beta_2 + a \alpha^z_2,\ldots,\beta_{d(z)} + a \alpha^z_{d(z)}\right)^\intercal.\label{eqn:w-representation}
\end{align}
Now, if $\gcd(a,z) \geq 2$, then $h(z) \geq 1$ as~$z$ cannot be prime, and one of the prime factors $p_1,\ldots,p_{h(z)}$ of~$z$, say~$p_j$, divides~$a$.
Since $\alpha^z_{k+j} = \frac{1}{p_j}$, this means that $a \alpha^z_{k+j} \in \ZZ$ and thus $\beta_{k+j} \in \ZZ$, contradicting that all $\beta_i \in (0,1)$.
Therefore, we necessarily have $\gcd(a,z) = 1$.

On the other hand, for a given $a \in \{1,2,\ldots,z-1\}$ with $\gcd(a,z) = 1$, there is a unique interior lattice point $\bw$ in~$P^z$.
In fact, by~\eqref{eqn:w-representation} the only choice of the coefficients $\beta_j$ to
make~$\bw$ integral is $\beta_j = 1 - \{a \alpha^z_j\}$, where $\{\dots\}$ denotes the fractional part.
By assumption on~$a$ and the definition of~$\alpha^z$, we obtain $0 < \beta_j < 1$, as desired.

We now argue that each interior lattice point $\bw \in \inter(P^z) \cap \ZZ^{d(z)}$ satisfies $\ca(P^z,\bw) = z-1$.
In view of Corollary~\ref{cor:ca-parallelepipeds}, $\ca(P^z,\bw)$ corresponds to $\max\left\{|\frac{a}{z}-\frac12| , \max_{2 \leq i \leq d(z)} |\beta_i - \frac12|\right\}$.
We need to show that this number always equals $\frac12-\frac{1}{z}$.
Using the representation~\eqref{eqn:w-representation} for~$\bw$ again, we first see that this maximum is at most $\frac12-\frac{1}{z}$, because by the definition of~$\alpha^z$, we have $\beta_i = 1 - \{a \alpha^z_i\} \in (\frac{1}{z} \ZZ) \setminus \ZZ$, for $2 \leq i \leq d(z)$.
On the other hand, as~$a$ and $z$ are coprime, there is an integer $r \in \ZZ$ such that $ar \equiv 1$ mod~$z$.
Recall that the first $k=\frac12 \phi(z)$ entries of $\alpha^z$ are given by $\alpha^z_i = \frac{a_i}{z}$, $1 \leq i \leq k$, and where $1 \leq a_1 < \dots < a_k \leq \frac{z}{2}$ constitute the first half of coprime integers to~$z$.
This means that there is an index $1 \leq i \leq k$ such that $r \equiv \pm a_i$ mod~$z$, that is, $a_i = \pm r + \ell z$, for some $\ell \in \ZZ$.
Thus
\[
\left\{a \alpha^z_i\right\} = \left\{\frac{aa_i}{z}\right\} = \left\{\pm\frac{a r}{z} + \ell\right\} = \left\{\pm\frac{ar}{z}\right\} = \left\{\pm\frac{1}{z}\right\}.
\]
For the corresponding coefficient $\beta_i$ this means that $|\beta_i-\frac12| = |1 - \left\{a \alpha^z_i\right\} - \frac12| = \frac12-\frac{1}{z}$.
\end{proof}

Some data on the parallelepipeds $P^z$ from Proposition~\ref{prop:extremal-parallelepipeds} is collected in Table~\ref{tbl:extremal-parallelepipeds}, and Figure~\ref{fig:P4-P5} shows the particular instances~$P^4$ and~$P^5$ and their interior lattice points.

\begin{figure}
\hfill
\includegraphics[scale=.9]{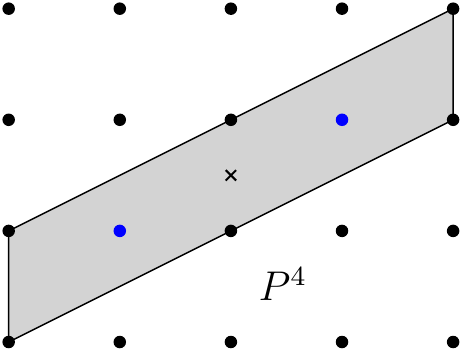}
\hfill
\includegraphics[scale=.9]{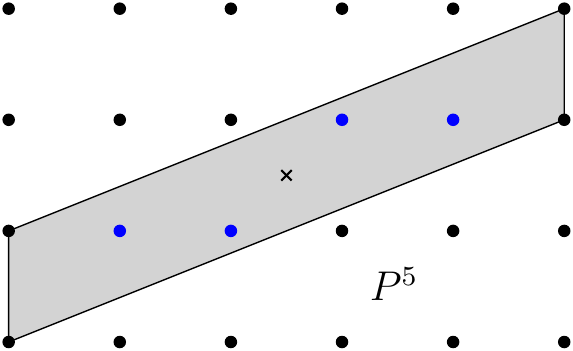}
\hfill\,
\caption{The lattice parallelograms $P^4$ and $P^5$ and their interior lattice points.}
\label{fig:P4-P5}
\end{figure}

\begin{table}[ht]
\centering
\captionsetup{position=below}
\begin{tabular}{|c|c|c|c|c|c|}
\hline
\textbf{$z$} & \textbf{$k = \frac12 \varphi(z)$} & \textbf{$h(z)$} & \textbf{$\alpha^z$} & \textbf{$\bv^z$} & \textbf{$\dim(P^z) = d(z)$} \\ \hline
$3$ & $1$ & $0$ & $(\frac13)$ & $(3)$ & $1$  \\ \hline
$4$ & $1$ & $1$ & $(\frac14,\frac12)$ & $(4,2)$ & $2$ \\ \hline
$5$ & $2$ & $0$ & $(\frac15,\frac25)$ & $(5,2)$ & $2$ \\ \hline
$6$ & $1$ & $2$ & $(\frac16,\frac13,\frac12)$ & $(6,2,3)$ & $3$ \\ \hline
$7$ & $3$ & $0$ & $(\frac17,\frac27,\frac37)$ & $(7,2,3)$ & $3$ \\ \hline
$8$ & $2$ & $1$ & $(\frac18,\frac38,\frac12)$ & $(8,3,4)$ & $3$ \\ \hline
$9$ & $3$ & $1$ & $(\frac19,\frac29,\frac49,\frac13)$ & $(9,2,4,3)$ & $4$ \\ \hline
$10$ & $2$ & $2$ & $(\frac{1}{10},\frac{3}{10},\frac12,\frac15)$ & $(10,3,5,2)$ & $4$ \\ \hline
$11$ & $5$ & $0$ & $(\frac{1}{11},\frac{2}{11},\frac{3}{11},\frac{4}{11},\frac{5}{11})$ & $(11,2,3,4,5)$ & $5$ \\ \hline
$12$ & $2$ & $2$ & $(\frac{1}{12},\frac{5}{12},\frac12,\frac13)$ & $(12,5,6,4)$ & $4$ \\ \hline
$13$ & $6$ & $0$ & $(\frac{1}{13},\frac{2}{13},\frac{3}{13},\frac{4}{13},\frac{5}{13},\frac{6}{13})$ & $(13,2,3,4,5,6)$ & $6$ \\ \hline
$14$ & $3$ & $2$ & $(\frac{1}{14},\frac{3}{14},\frac{5}{14},\frac12,\frac17)$ & $(14,3,5,7,2)$ & $5$ \\ \hline
$15$ & $4$ & $2$ & $(\frac{1}{15},\frac{2}{15},\frac{4}{15},\frac{7}{15},\frac13,\frac15)$ & $(15,2,4,7,5,3)$ & $6$ \\ \hline
$16$ & $4$ & $1$ & $(\frac{1}{16},\frac{3}{16},\frac{5}{16},\frac{7}{16},\frac12)$ & $(16,3,5,7,8)$ & $5$ \\ \hline
$17$ & $8$ & $0$ & $(\frac{1}{17},\frac{2}{17},\frac{3}{17},\frac{4}{17},\frac{5}{17},\frac{6}{17},\frac{7}{17},\frac{8}{17})$ & $(17,2,3,4,5,6,7,8)$ & $8$ \\ \hline
$18$ & $3$ & $2$ & $(\frac{1}{18},\frac{5}{18},\frac{7}{18},\frac12,\frac13)$ & $(18,5,7,9,6)$ & $5$ \\ \hline
$19$ & $9$ & $0$ & $(\frac{1}{19},\frac{2}{19},\frac{3}{19},\frac{4}{19},\frac{5}{19},\frac{6}{19},\frac{7}{19},\frac{8}{19},\frac{9}{19})$ & $(19,2,3,4,5,6,7,8,9)$ & $9$ \\ \hline
$20$ & $4$ & $2$ & $(\frac{1}{20},\frac{3}{20},\frac{7}{20},\frac{9}{20},\frac12,\frac15)$ & $(20,3,7,9,10,4)$ & $6$ \\ \hline
\end{tabular}
\caption{The definining data and the dimension of the parallelepipeds $P^z$, for $3 \leq z \leq 20$.}
\label{tbl:extremal-parallelepipeds}
\end{table}

Once we take care of a subtlety regarding the dimensions of~$P^z$, we can turn them into examples that show that the bound in Theorem~\ref{thm:ca-parallelepipeds} is best possible, for any given dimension~$d$.
This subtlety arises from the fact that there are integers~$d$ such that for the maximal $z \in \ZZ$ with $d(z) = \frac12\phi(z) + h(z) \leq d$, we have strict inequality $d(z) < d$.
For instance, this happens for the first time for $d=12$.
Here $w(12) = 60$, but $d(60) = \frac12\varphi(60) + h(60) = 11$ (see Table~\ref{tbl:psi-d-values}).
However, in such cases we may suitably lift $P^z$ into the correct dimension.

\begin{proposition}
\label{prop:extremal-lattice-parallelepipeds}
Let $d \in \NN$ and let $z = w(d)$.
Then every interior lattice point~$\bw$ of the lattice parallelepiped
\[
P(d) := P^z + [\bzero,2 \be_{d(z)+1}] + \dots + [\bzero,2\be_d] \subseteq \RR^d
\]
satisfies $\ca(P(d),\bw) = w(d) - 1$, where we think of $P^z$ as embedded into the subspace $\RR^{d(z)} \times \{\bzero\}^{d-d(z)}$.

In particular, the bound in Theorem~\ref{thm:ca-parallelepipeds} is best possible.
\end{proposition}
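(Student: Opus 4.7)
The plan is to exploit the Cartesian product structure of $P(d)$ and reduce the computation to Corollary~\ref{cor:ca-parallelepipeds} combined with Proposition~\ref{prop:extremal-parallelepipeds}. Under the indicated embedding of $P^z$ into $\RR^{d(z)} \times \{\bzero\}^{d-d(z)}$, the parallelepiped decomposes as
\[
P(d) = P^z \times [0,2]^{d-d(z)}.
\]
I would first observe that any interior lattice point $\bw \in \inter(P(d)) \cap \ZZ^d$ therefore splits as $\bw = (\bw',\bw'')$ with $\bw' \in \inter(P^z) \cap \ZZ^{d(z)}$ and $\bw'' \in \inter([0,2]^{d-d(z)}) \cap \ZZ^{d-d(z)}$. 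The cube $[0,2]^{d-d(z)}$ contains a single interior lattice point, namely its center~$\bone$, which pins~$\bw''$ down uniquely.

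Next I would write $\bw$ in terms of the generators of $P(d)$:
\[
\bw = \alpha_1 \bv^z + \sum_{i=2}^{d(z)} \alpha_i \be_i + \sum_{i=d(z)+1}^{d} \alpha_i \cdot (2\be_i),
\]
with all $\alpha_i \in (0,1)$. For the extra indices $i > d(z)$, the centering of $\bw''$ forces $\alpha_i = \tfrac{1}{2}$, so these coordinates contribute $|\alpha_i - \tfrac{1}{2}| = 0$ to the maximum. For $i \leq d(z)$, the proof of Proposition~\ref{prop:extremal-parallelepipeds} gives $\max_{1 \leq i \leq d(z)} |\alpha_i - \tfrac{1}{2}| = \tfrac{1}{2} - \tfrac{1}{z}$, and this value is actually attained. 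Consequently the overall maximum $M := \max_{1 \leq i \leq d} |\alpha_i - \tfrac{1}{2}|$ equals $\tfrac{1}{2} - \tfrac{1}{z}$.

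Applying Corollary~\ref{cor:ca-parallelepipeds} then yields
\[
\ca(P(d),\bw) = \frac{\tfrac{1}{2} + M}{\tfrac{1}{2} - M} = \frac{1 - 1/z}{1/z} = z - 1 = w(d) - 1,
\]
independently of which interior lattice point of $P(d)$ we chose. Combined with Theorem~\ref{thm:ca-parallelepipeds}, this shows the bound $w(d) - 1$ is attained in every dimension, proving the ``in particular'' claim. There is no genuine obstacle here: the argument is essentially a product-type extension of Proposition~\ref{prop:extremal-parallelepipeds}, and the only subtle design choice is the factor~$2$ in the extra generators, which is precisely what makes the unique interior lattice point of each $[0,2]$ segment sit at the midpoint and hence contribute~$0$ (rather than something positive) to~$M$.
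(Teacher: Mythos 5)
Your proposal is correct and follows essentially the same route as the paper: identify interior lattice points of $P(d)$ as $(\bw',\bone)$ with $\bw'\in\inter(P^z)\cap\ZZ^{d(z)}$, note that the coefficients on the extra generators $2\be_i$ are forced to be $\tfrac12$ and hence contribute nothing to the maximum in Corollary~\ref{cor:ca-parallelepipeds}, and conclude $\ca(P(d),\bw)=\ca(P^z,\bw')=z-1$ via Proposition~\ref{prop:extremal-parallelepipeds}. No gaps.
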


\begin{proof}
Every interior lattice point $\bw \in \inter(P(d)) \cap \ZZ^d$ is of the form $\bw = (w_1',\ldots,w_{d(z)}',1,\ldots,1)$, where $\bw' = (w_1',\ldots,w_{d(z)}')$ is an interior lattice point of~$P^z$.
The coefficients in the representation of~$\bw$ in the basis of generators of~$P(d)$, which correspond to the last $d-d(z)$ coordinates are equal to~$\frac12$, independently of the particular choice of~$\bw$.
Thus, in view of Corollary~\ref{cor:ca-parallelepipeds} and Proposition~\ref{prop:extremal-parallelepipeds}, 
\[
\ca(P(d),\bw) = \ca(P^z,\bw') = z-1 = w(d) - 1 \, ,
\]
as desired.
\end{proof}

\begin{remark}
The parallelepipeds $P(d)$ in Proposition~\ref{prop:extremal-lattice-parallelepipeds} exhibit an even stronger extremality property.
Namely, one may ask whether one can improve and refine the bound in Theorem~\ref{thm:ca-parallelepipeds} in terms of the number $\ell = \#\left( \inter(P) \cap \ZZ^d \right) \geq 1$ of interior lattice points in a given lattice parallelepiped~$P \subseteq \RR^d$.
However, since~$P(d)$ has exactly $\phi(w(d))$ interior lattice points (see Proposition~\ref{prop:extremal-parallelepipeds})---a number that asymptotically grows to infinity with~$d$---such a refinement is not possible when we ask the parameter~$\ell$ to be independent of the dimension~$d$.
\end{remark}

\section{Deep Lattice Points in Arbitrary Lattice Zonotopes}
\label{sect:deeplattptszonotopes}

In this section, we extend the bound in Theorem~\ref{thm:ca-parallelepipeds} from lattice parallelepipeds to arbitrary lattice zonotopes, and thus complete the proof of Theorem~\ref{thm:main-ca-precise-bound-zonotopes}.
Our argument is based on a zonotopal version of the following theorem of Steinitz.

\begin{theorem}[Steinitz 1914]
Let $S \subseteq \RR^d$ and let~$\bw$ be a point in the interior of the convex hull of~$S$.
Then there are at most~$2d$ points in~$S$ that contain~$\bw$ in the interior of their convex hull.
\end{theorem}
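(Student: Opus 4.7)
The plan is induction on the dimension $d$. For $d = 1$, the interior condition yields points of $S$ on both sides of $\bw$, and any such pair places $\bw$ in the interior of its convex hull.

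For the inductive step, translate so that $\bw = \bzero$; since $\bzero$ is interior to $\conv S$ it is not an extreme point, and (after reducing to a finite subset, if necessary) we may assume $\bzero \notin S$. By Carath\'eodory's theorem there exist affinely independent points $\bs_1, \ldots, \bs_k \in S$ with $k \leq d+1$ and strictly positive weights $\lambda_1, \ldots, \lambda_k$ summing to $1$ such that $\bzero = \sum_{i=1}^k \lambda_i \bs_i$. The affine span of these points contains $\bzero$ and therefore coincides with their linear span $V \subseteq \RR^d$, of dimension $r = k-1 \geq 1$ (since $\bzero \notin S$ forces $k \geq 2$). Moreover, $\bzero$ lies in the relative interior of the simplex $\conv\{\bs_1, \ldots, \bs_k\}$ inside $V$.

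If $r = d$, this simplex itself provides at most $k \leq d+1 \leq 2d$ witnesses. Otherwise, project $S$ onto $V^\perp$ via $\pi$; the image lives in a subspace of dimension $d - r < d$, and linearity gives $\bzero = \pi(\bzero) \in \inter(\conv \pi(S))$. The inductive hypothesis in $V^\perp$ then yields $m \leq 2(d-r)$ points $\bs_{k+1}, \ldots, \bs_{k+m} \in S$ whose projections contain $\bzero$ in the interior of their convex hull inside $V^\perp$. Setting $T = \{\bs_1, \ldots, \bs_{k+m}\}$, the count reads
\[
|T| \leq (r+1) + 2(d-r) = 2d - r + 1 \leq 2d,
\]
using $r \geq 1$.

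The main obstacle is to confirm that $\bzero$ lies in the \emph{interior} of $\conv T$ in $\RR^d$, not merely in the convex hull. For an arbitrary direction $\bv = \bv_V + \bv_{V^\perp}$ and small $\epsilon > 0$, I would first express $\epsilon \bv_{V^\perp}$ as a small-weight convex combination of the projections $\pi(\bs_{k+j})$ (possible by the inductive interior condition in $V^\perp$), then lift it to a combination $\sum_j \alpha_j \bs_{k+j}$ whose $V^\perp$-part is $\epsilon \bv_{V^\perp}$ and whose $V$-part is some small error vector $\bx \in V$. Finally, I correct this error by adding a convex combination $\sum_i \beta_i \bs_i$ of the first $k$ points, with total weight $1 - \sum_j \alpha_j$, chosen so that the $V$-component of the sum equals $\epsilon \bv_V$. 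Positivity of the $\beta_i$ follows from $\bzero$ lying in the relative interior of $\conv\{\bs_1,\ldots,\bs_k\}$ in $V$. The delicate point is balancing the two halves so that all coefficients are strictly positive and the total weight is exactly $1$, which will yield $\bzero + \epsilon \bv \in \conv T$ for every $\bv$ and thereby $\bzero \in \inter(\conv T)$.
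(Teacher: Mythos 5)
The paper does not prove this statement at all: it records the theorem as a classical result of Steinitz and points to Steinitz's original paper, Gustin's proof, and Gr\"unbaum's survey, so there is no in-paper argument to compare yours against. Your induction on the dimension via Carath\'eodory and orthogonal projection is the standard route to this result, and I believe it is correct. The count $(r+1)+2(d-r)=2d-r+1\le 2d$ is right precisely because $r\ge 1$, which is why the reduction to $\bzero\notin S$ (harmless, since an interior point is never needed as a generator of the hull) matters. The openness of the surjective linear projection $\pi$ correctly gives $\bzero\in\inter(\conv\pi(S))$ in $V^\perp$. The one step you leave as a sketch, the interiority of $\conv T$, does go through as you describe, and it is worth recording the order of quantifiers that makes it work: first fix a small total weight $t>0$ to be spent on $\bs_{k+1},\dots,\bs_{k+m}$, so that the resulting error vector $\bx\in V$ has norm $O(t)$; then, since $\bzero$ lies in the relative interior of $\conv\{\bs_1,\dots,\bs_k\}$ in $V$, the set $(1-t)\conv\{\bs_1,\dots,\bs_k\}$ contains a ball in $V$ of radius bounded below independently of small $t$, so for $t$ and then $\epsilon$ small enough both $(\epsilon/t)\bv_{V^\perp}\in\conv\{\pi(\bs_{k+j})\}$ and $\epsilon\bv_V-\bx\in(1-t)\conv\{\bs_1,\dots,\bs_k\}$ hold, yielding $\epsilon\bv\in\conv T$. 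Note also that you do not need strict positivity of the coefficients, nor an $\epsilon$ uniform in $\bv$: having $\epsilon_{\bv}\bv\in\conv T$ for the $2d$ directions $\bv=\pm\be_i$ already places a full-dimensional crosspolytope around $\bzero$ inside $\conv T$.
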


Steinitz' work~\cite[\S 20]{steinitz1914bedingtII} is in the language of ``all-sided families of rays''; Gustin~\cite{gustin1947ontheinterior} gives a different proof in the formulation above, and we also refer to Gr\"unbaum~\cite{gruenbaum1962thedimension} for more pointers to incarnations of Steinitz' result in various other contexts.
Our zonotopal version now reads as follows:

\begin{theorem}
\label{thm:zonotopal-steinitz}
Let $Z = \Zon(\bg_1,\dots,\bg_m)$ be the zonotope generated by $\bg_1,\dots,\bg_m \in \RR^d$, and let~$\bc$ be its center.
For every $\bv \in \inter(Z)$, there is a $(d-1)$-dimensional parallelepiped~$Q$ that is a translate of a parallelepiped generated by a subset of $\{\bg_1,\dots,\bg_m\}$ such that~$\bv$ is contained in the interior of the parallelepiped $P \subseteq Z$ given by $P = \conv\left( Q, 2\bc - Q \right)$.

Moreover, if~$Z$ is a lattice zonotope, then~$P$ is a lattice parallelepiped.
\end{theorem}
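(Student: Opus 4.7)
The plan is to construct $P$ explicitly for each interior point $\bv$ by choosing a $(d-1)$-element subset $S \subseteq [m]$ with $\{\bg_i : i \in S\}$ linearly independent, together with a base point $\bt$ in the complementary zonotope $R_S := \Zon(\bg_j : j \notin S)$. Setting $Q := \bt + \Zon(\bg_i : i \in S)$, a direct computation shows that the reflection $2\bc - Q$ is itself a translate of $Q$, so that $P := \conv(Q, 2\bc - Q)$ equals the parallelepiped $Q + [\bzero, \bg^*]$ with last generator $\bg^* := \sum_{j \notin S}\bg_j - 2\bt$. One checks that $P \subseteq Z$ whenever $\bt \in R_S$, that $P$ is automatically centered at $\bc$, and that $P$ is $d$-dimensional if and only if $\bg^* \notin \lin(\bg_i : i \in S)$. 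Crucially, when $Z$ is a lattice zonotope, $P$ becomes a lattice parallelepiped as soon as $\bt$ is itself a lattice point of $R_S$.

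The first key step is to translate the condition $\bv \in \inter(P)$ into a concrete requirement on $\bt$. Using the fact that interior points of a zonotope admit representations with all coefficients strictly between $0$ and $1$, fix $\bv = \sum_i t_i \bg_i$ with all $t_i \in (0, 1)$. Expanding the identity $\bv \in \inter(P)$ and matching coefficients, one obtains $\bt = \sum_{j \notin S} q_j \bg_j$ with $q_j = (t_j - r)/(1 - 2r)$ for a parameter $r \in (0, 1/2)$; taking $r$ smaller than $\min_{j \notin S}\min(t_j, 1 - t_j)$ places all $q_j$ in $(0, 1)$. For the non-degeneracy, introduce $\bw := 2(\bc - \bv) = \sum_i (1 - 2t_i)\bg_i$; a short calculation identifies the condition $\bg^* \notin \lin(\bg_i : i \in S)$ with $\bw \notin \lin(\bg_i : i \in S)$. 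When $\bv \neq \bc$, so that $\bw \neq 0$, a basis-exchange argument on any basis $B \subseteq \{\bg_i\}$ of $\RR^d$ yields such an $S$: writing $\bw = \sum_{k \in B}\beta_k \bg_k$, pick any $k_0 \in B$ with $\beta_{k_0} \neq 0$ and set $S := B \setminus \{k_0\}$. The case $\bv = \bc$ is handled separately by exploiting the non-uniqueness of interior representations.

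The main obstacle is the moreover statement: forcing $\bt$ to be a lattice point so that $P$ inherits integrality. The natural lattice candidates inside $R_S$ are the subset sums $\bt_T := \sum_{j \in T}\bg_j$ for $T \subseteq [m] \setminus S$. Using such a binary $\bt_T$ constrains the interior representation $(t_i)$ of $\bv$ to take exactly two values, $r$ and $1-r$, on the coordinates outside $S$, which is a strong condition. The hard part of the argument is to show that for every $\bv \in \inter(Z) \cap \ZZ^d$, at least one pair $(S, T)$ of the required form exists; I would approach this by jointly varying $S$ and the direction $\bu$ used to define $T = \{j \notin S : \bu \cdot \bg_j > 0\}$, and arguing that the resulting family of central parallelepipeds $P(S, \bt_T)$ covers $\inter(Z)$. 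This covering phenomenon plays the role of the classical Steinitz theorem (every interior point of a convex hull lies in the convex hull of some $2d$ of the generators) in the zonotopal setting and is the combinatorial-geometric heart of the result.
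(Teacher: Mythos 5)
Your setup is sound as far as it goes: the identification of $P=\conv(Q,2\bc-Q)$ with $Q+[\bzero,\bg^*]$ where $\bg^*=\sum_{j\notin S}\bg_j-2\bt$, the verification that $P\subseteq Z$ for $\bt\in R_S$, the centering at $\bc$, and the observation that the lattice conclusion forces $\bt$ to be a lattice point (in practice a subset sum $\bt_T$) are all correct reformulations. But the proof stops exactly where the theorem begins. The existence, for every $\bv\in\inter(Z)$, of a pair $(S,T)$ with $\{\bg_i:i\in S\}$ linearly independent, $T\subseteq[m]\setminus S$, and $\bv\in\inter\bigl(P(S,\bt_T)\bigr)$ --- equivalently, the claim that the central parallelepipeds $P(S,\bt_T)$ cover $\inter(Z)$ --- is the entire content of the statement, and you explicitly defer it (``I would approach this by \dots\ and arguing that \dots''). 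Nothing in the proposal establishes this covering; note also that for an arbitrary interior point the coefficients $t_j$, $j\notin S$, of a \emph{fixed} representation of $\bv$ will generically not take only the two values $r$ and $1-r$, so one must exploit the non-uniqueness of the representation in an essential, and so far unproven, way.

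The missing ingredient is a zonotopal tiling theorem. The paper's route is: take the line through $\bc$ and $\bv$, let $\bp$ be one of the two antipodal boundary points where it exits $Z$, and let $F$ be a facet containing $\bp$. Every facet of $Z$ is itself a zonotope of the form $\Zon(\bg_{\sigma(1)},\dots,\bg_{\sigma(s)})+\sum_{i>s}\varepsilon_i\bg_{\sigma(i)}$ with $\varepsilon_i\in\{0,1\}$, and by Shephard's theorem it can be tiled by $(d-1)$-dimensional parallelepipeds of exactly this form, i.e.\ subset-sum translates of parallelepipeds spanned by subsets of the generators. Choosing the tile $Q\ni\bp$ and setting $P=\conv(Q,2\bc-Q)$ puts the open segment $(\bp,2\bc-\bp)$, hence $\bv$, in $\inter(P)$, and the subset-sum translates make $P$ a lattice parallelepiped whenever $Z$ is a lattice zonotope. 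Your idea of varying $S$ together with a direction $\bu$ defining $T=\{j\notin S:\bu\cdot\bg_j>0\}$ is indeed how such tilings are indexed, but turning that into a covering proof is essentially reproving Shephard's theorem; you should either cite it or carry out that argument in full.
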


\begin{proof}
Let $\bc = \frac12(\bg_1+\dots+\bg_m)$ be the center of~$Z$.
Consider the line~$\ell$ passing through~$\bc$ and~$\bv$.
In the special case that $\bv = \bc$, we let~$\ell$ be an arbitrary line through~$\bc$.
Now, $\ell$ intersects the boundary of~$Z$ in a pair of opposite points~$\bp$ and~$\bp'$.
Let $F$ be a facet of~$Z$ that contains~$p$.
Facets of zonotopes are zonotopes themselves, and in particular, we can write
\begin{align}
F = \Zon(\bg_{\sigma(1)},\dots,\bg_{\sigma(s)}) + \varepsilon_{s+1} \bg_{\sigma(s+1)} + \dots + \varepsilon_{m} \bg_{\sigma(m)},\label{eqn:zonotope-face-representation}
\end{align}
for some $s \in [m]$, some permutation $\sigma$ of~$[m]$, and some $\varepsilon_i \in \{0,1\}$ (see, e.g.,~\cite{shephard1974combinatorial}).

By a result of Shephard~\cite{shephard1974combinatorial} (see also~\cite[Chapter~7]{becksanyal2018combinatorial}) the zonotope~$F$ admits a tiling into parallelepipeds of the form~\eqref{eqn:zonotope-face-representation}, for suitable~$s$, $\sigma$ and~$\varepsilon_i$.
Upon taking~$Q$ to be a parallelepiped in such a tiling of~$F$ with $\bp \in Q$, we find that the interior of the segment with endpoints~$\bp$ and~$\bp'$ is contained in the interior of the $d$-dimensional parallelepiped $P = \conv\left( Q, 2\bc - Q \right) \subseteq Z$, and in particular $\bv \in \inter(P)$.

If~$Z$ is a lattice zonotope, every zonotope of the form~\eqref{eqn:zonotope-face-representation} is a lattice zonotope itself, in particular~$Q$ is.
Moreover, $2\bc \in \ZZ^d$ and thus $2\bc - Q$ is a lattice parallelepiped, implying that the constructed parallelepiped $P \subseteq Z$ is a lattice parallelepiped as well.
\end{proof}

\begin{proof}[Proof of Theorem~\ref{thm:main-ca-precise-bound-zonotopes}]
Let $\bv \in \inter(Z) \cap \ZZ^d$.
Then, by Theorem~\ref{thm:zonotopal-steinitz}, we find a lattice parallelepiped~$P \subseteq Z$ having the same center as~$Z$ and with $\bv \in \inter(P)$.
Applying Theorem~\ref{thm:ca-parallelepipeds} to~$P$, we obtain an interior lattice point $\bw \in \inter(P)$ with $\ca(P,\bw) \leq w(d) - 1$.
Since~$P$ and~$Z$ are symmetric about the same point, we can use the
translation invariance of the coefficient of asymmetry (see
Proposition~\ref{prop:ca-covariance-translation-scaling}) and obtain $\ca(Z,\bw) \leq \ca(P,\bw) \leq w(d) - 1$ by Corollary~\ref{cor:ca-monotonicity}.

The asymptotic behavior $w(d)-1 \in \Theta(d \log\log d)$ follows from the one of $\rab(d)$ in~\eqref{eqn:psi-d-asymptotics}.
Optimality of the derived bound has been established already in Proposition~\ref{prop:extremal-lattice-parallelepipeds}.
\end{proof}

\section{Integer-point Enumeration for the Lonely Runner Zonotope}
\label{sect:ehrhart}

In this part, we aim to give a geometric intuition for the ``finite checking'' result for the Lonely Runner Conjecture due to Tao~\cite{tao2018someremarks}.

\begin{theorem}[{Tao~\cite[Theorem~1.4]{tao2018someremarks}}]
\label{thm:tao-finite-checking-lrc}
There exists an absolute and explicitly computable constant $C_0 > 0$, such that the following statements are equivalent for every natural number $d_0 \geq 1$:
\begin{enumerate}[(i)]
 \item The Lonely Runner Conjecture holds for every dimension $d \leq d_0$.
 \item The bound $\gap(\bn) \geq \frac{1}{d+1}$ holds for every velocity vector $\bn \in \ZZ^d_{>0}$ with $d \leq d_0$ and $n_i \leq d^{C_0 d^2}$,  $i \in [d]$.
\end{enumerate}
\end{theorem}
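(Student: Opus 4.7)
The forward direction (i) $\Rightarrow$ (ii) is immediate, since (ii) is just the special case of (i) restricted to velocity vectors with entries bounded by $d^{C_0 d^2}$. The content therefore lies in the reverse implication, which I would prove by contrapositive: given any counterexample $\bn \in \ZZ^d_{>0}$ to the Lonely Runner Conjecture in some dimension $d \leq d_0$, I want to construct another counterexample $\bn' \in \ZZ^d_{>0}$ in the same dimension with all entries at most $d^{C_0 d^2}$.

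The guiding idea is that $\gap(\bn)$ depends essentially on the rational direction $\bn/N \in (0,1]^d$, where $N := \max_i n_i$, rather than on the actual magnitudes of the $n_i$. I would exploit this via simultaneous Diophantine approximation: by a quantitative form of Dirichlet's theorem, for any accuracy $\varepsilon > 0$ there exist an integer $q \in [1, \varepsilon^{-d}]$ and integers $p_1, \ldots, p_d$ such that $|q \cdot n_i/N - p_i| \leq \varepsilon$ for all $i$. Setting $\bn' := (p_1, \ldots, p_d)$, after bounded-size adjustments to enforce positivity and pairwise distinctness, produces a candidate velocity vector of size $O(q) = O(\varepsilon^{-d})$. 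The crucial analytic input is then a quantitative stability lemma for the gap function: if the normalized directions of two velocity vectors differ in $\ell^\infty$-norm by at most $\delta$, then $|\gap(\bn) - \gap(\bn')|$ is bounded by an explicit function $h(\delta, d)$ that vanishes as $\delta \to 0$. This should rest on the $1$-Lipschitz property of $\|\cdot\|_\ZZ$ together with the covariance of $\gap$ under positive rescaling via the reparametrization $\beta \mapsto \beta/c$, so that small coordinate-wise perturbations of $\beta^\ast n_i$ at the optimizer $\beta^\ast$ change $\min_i \|\beta^\ast n_i\|_\ZZ$ by at most the perturbation size.

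The hard part is calibrating $\varepsilon$ so that $h(\varepsilon, d) < \frac{1}{d+1} - \gap(\bn)$ while simultaneously keeping $\varepsilon^{-d} \leq d^{C_0 d^2}$. A uniform slack $\frac{1}{d+1} - \gap(\bn) \geq \eta(d)$ independent of $\bn$ is needed; rationality of $\gap(\bn)$ (attained at some $\beta^\ast = a/b$ with $b \leq 2N$) only yields a slack of order $1/N$, which is insufficient. The genuine argument must therefore amplify this slack, presumably by iterating the approximation step in an LLL-style or Minkowski-second-theorem-style reduction, gaining a factor of roughly $d^{O(d)}$ at each of $O(d)$ rounds and aggregating to the claimed double-exponential bound. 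Extracting the explicit value of the absolute constant $C_0$ through this iterative accounting is the technically most demanding piece of the proof and is the step I expect to be the main obstacle.
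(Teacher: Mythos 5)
This statement is quoted from Tao~\cite[Theorem~1.4]{tao2018someremarks}; the paper offers no proof of it, so there is nothing internal to compare your argument against, and your proposal has to stand on its own. It does not: the reverse implication is left with a genuine hole, and it is exactly the hole you yourself point to. Your forward direction and your reduction to ``shrink a counterexample'' are fine, but the Dirichlet-plus-stability scheme provably cannot close the argument. Your stability estimate gives $\gap(\bn') \leq \gap(\bn) + \varepsilon$ for an approximant with denominator $q \leq \varepsilon^{-d}$, so you need $\varepsilon < \tfrac{1}{d+1} - \gap(\bn)$. Since $\gap(\bn)$ is attained at a rational $\beta = a/(n_i+n_j)$, this slack can be as small as roughly $1/((d+1)\max_i n_i)$, which forces $q$ of order $(\max_i n_i)^d$ --- \emph{larger} than the entries you started with, not smaller. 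The sentence invoking an ``LLL-style or Minkowski-second-theorem-style'' iteration that ``amplifies the slack'' is a placeholder, not an argument: you give no mechanism by which iterating a rational approximation of the direction $\bn/\max_i n_i$ could ever decouple the required accuracy from $\max_i n_i$, and no such mechanism exists within a purely perturbative framework, because $\gap$ genuinely depends on the arithmetic of the $n_i$ and not just on their direction up to small errors.

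Tao's actual proof is not a continuity argument. Its engine is the near-tightness of the covering of the time circle by the ``bad'' arcs $\{t : \|t n_i\|_\ZZ < \tfrac{1}{d+1}\}$, whose total measure $\tfrac{2d}{d+1}$ barely exceeds $1$; for a counterexample this forced efficiency imposes arithmetic structure on the velocities (common divisors, containment in low-complexity progressions), which is then used to replace the instance by one of smaller dimension or with smaller entries, iterating boundedly many times. That structural reduction, not Diophantine approximation of the direction vector, is what produces the $d^{C_0 d^2}$ bound. So the part of the proof you flag as ``the main obstacle'' is in fact the entire content of the theorem, and it requires a different strategy from the one you propose.
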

We take the viewpoint of the lonely runner zonotope~$Z_{\bn} = \Zon(\be_1,\ldots,\be_d,\bn)$, that we introduced at the end of Section~\ref{sect:lonelyrunners}.
Based on the reformulation of the Lonely Runner Conjecture in that earlier section on the existence of deep interior lattice points in the zonohedron~$Z(\bn)$, the heuristic thought is as follows:
If there is a sufficiently large number of interior lattice points in any localized version of~$Z(\bn)$, then one of those lattice points lies deep enough in~$Z(\bn)$ to satisfy the conjectured bound.

It turns out that the number of (interior) lattice points in~$Z_\bn$ can be calculated explicitly.

\begin{theorem}
\label{thm:num-ilps-lr-zonotopes}
Let $\bn \in \ZZ_{>0}^d$ be a velocity vector.
Then $\vol(Z_\bn) = 1 + n_1 + \dots + n_d$ and
\[
\#(\inter(Z_{\bn}) \cap \ZZ^d) \ = \sum_{\substack{\ell \in \NN \textrm{ such that}\\
\ell \mid n_j \textrm{ for some } j \in [d]}} \varphi(\ell)
\qquad \textrm{and} \qquad \#(Z_\bn \cap \ZZ^d) = 2^d + \sum_{\substack{\ell \in \NN \textrm{ such that}\\
\ell \mid n_j \textrm{ for some } j \in [d]}} \varphi(\ell) \left(2^{\# J_\ell} - 1\right) \,,
\]
where $J_\ell$ is the inclusion-wise maximal subset $J \subseteq [d]$ such that $\ell \mid n_j$, for all $j \in J$.
\end{theorem}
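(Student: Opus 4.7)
The plan is to invoke Stanley's formula for the Ehrhart polynomial of a lattice zonotope and then simplify the resulting sum by means of the classical identity $n = \sum_{\ell \mid n} \varphi(\ell)$. Stanley's theorem asserts that for any lattice zonotope $Z = \Zon(\bg_1,\ldots,\bg_m) \subseteq \RR^d$,
\[
  L_Z(t) \ = \ \sum_{X} h(X) \, t^{|X|},
\]
where $X$ ranges over the linearly independent subsets of $\{\bg_1,\ldots,\bg_m\}$ and $h(X)$ is the greatest common divisor of the $|X| \times |X|$ minors of the matrix whose columns are the vectors in~$X$. I apply this with the generators $\be_1,\ldots,\be_d,\bn$ of~$Z_\bn$.

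The linearly independent subsets naturally split into two families. Every $X \subseteq \{\be_1,\ldots,\be_d\}$ is linearly independent with $h(X) = 1$. A set $X = \{\be_i : i \in I\} \cup \{\bn\}$ is linearly independent if and only if $I \subsetneq [d]$, and a short cofactor expansion along the column~$\bn$ shows that every maximal minor of its defining matrix equals $0$ or $\pm n_k$ for some $k \in [d] \setminus I$; hence $h(X) = \gcd\{n_k : k \in [d]\setminus I\}$. Extracting the leading coefficient $[t^d] L_{Z_\bn}(t)$ immediately yields $\vol(Z_\bn) = 1 + n_1 + \cdots + n_d$, and writing $J = [d] \setminus I$ the full polynomial reads
\[
  L_{Z_\bn}(t) \ = \ \sum_{I \subseteq [d]} t^{|I|} \ + \ \sum_{\emptyset \neq J \subseteq [d]} \gcd\{n_j : j \in J\} \, t^{d - |J| + 1}.
\]

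The total count $\#(Z_\bn \cap \ZZ^d) = L_{Z_\bn}(1)$ then follows immediately, while the interior count comes from Ehrhart--Macdonald reciprocity $\#(\inter(Z_\bn) \cap \ZZ^d) = (-1)^d L_{Z_\bn}(-1)$; the pure-cube contribution $\sum_I t^{|I|}$ evaluates to $2^d$ and~$0$ respectively. To match the claimed shape I rewrite each gcd through $\gcd\{n_j : j \in J\} = \sum_\ell \varphi(\ell)\,\mathbf{1}[J \subseteq J_\ell]$ and swap the two summations. The inner sum $\sum_{\emptyset \neq J \subseteq J_\ell} 1 = 2^{|J_\ell|} - 1$ handles the total count, and for the interior count the binomial identity $\sum_{J \subseteq J_\ell}(-1)^{|J|} = 0$ combined with the signs produced by reciprocity collapses the inner alternating sum to exactly~$1$ for each~$\ell$ with $J_\ell \neq \emptyset$, giving the stated formula. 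The main subtlety lies precisely in this last bookkeeping step, where the signs from $(-1)^d L_{Z_\bn}(-1)$ must be tracked carefully so as to align with the totient identity; once unwound, the argument is essentially mechanical.
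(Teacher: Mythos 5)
Your proposal is correct and follows essentially the same route as the paper: Stanley's formula for the Ehrhart polynomial of the zonotope, Ehrhart--Macdonald reciprocity for the interior count, the identity $\gcd(n_J)=\sum_{\ell\mid n_J}\varphi(\ell)$, an exchange of summation, and the alternating binomial sum to collapse the inner sum to $1$. The only difference is presentational (you work with the full polynomial $L_{Z_\bn}(t)$ rather than coefficient by coefficient), and your computations of the relevant minors and signs check out.
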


\begin{proof}
We write
\[
Z_{\bn} = \sum_{j=1}^{d+1} [\bzero,\bv_j], \quad \textrm{ where } \quad V =
(\bv_1,\ldots,\bv_{d+1}) = (\be_1,\ldots,\be_d,\bn) \, .
\]
We now employ Ehrhart theory; see, e.g.,~\cite{becksanyal2018combinatorial}. The
lattice-point counting function $\#(k Z_\bn \cap \ZZ^d)$ is a polynomial in the positive integer variable $k$, which we write as
\[
  \#(k Z_\bn \cap \ZZ^d) = \sum_{ j=0 }^{ d } g_j(Z_{\bn}) \, k^j .
\]
Since $n_i \neq 0$, for every $i \in [d]$, Stanley's formula (see, e.g.,~\cite[Ex.~31,
p.~272]{stanley1997enumerative}) for the Ehrhart coefficients $g_i(Z_{\bn})$ of~$Z_{\bn}$ yields
\begin{align}
g_i(Z_{\bn}) = \sum_{J \in \binom{[d+1]}{i}} \gcd\left(i\textrm{-minors of }V_J\right) = \binom{d}{i} + \sum_{J \in \binom{[d]}{i-1}} \gcd\left( n_j : j \notin J \right) \,,\label{eqn:ehrhart-Z-n}
\end{align}
where $V_J$ denotes the subset of $V$ indexed by $J$.
In terms of the Ehrhart coefficients, the three functionals we are after have the expressions~\cite[Chapter~5]{becksanyal2018combinatorial}
\[
\vol(Z_\bn) = g_d(Z_\bn), \quad \#(\inter(Z_\bn) \cap \ZZ^d) = (-1)^d \sum_{i=0}^d (-1)^ig_i(Z_{\bn}) \quad \textrm{and} \quad \#(Z_\bn \cap \ZZ^d) = \sum_{i=0}^d g_i(Z_{\bn}) \,.
\]
The formula $\vol(Z_\bn) = 1 + n_1 + \dots + n_d$ thus follows directly from~\eqref{eqn:ehrhart-Z-n}, while we have to work a little more for the lattice point counts.
We first consider the number of interior lattice points of~$Z_\bn$:
\begin{align}
\#(\inter(Z_{\bn}) \cap \ZZ^d) &= (-1)^d \sum_{i=0}^d (-1)^ig_i(Z_{\bn}) = (-1)^d \sum_{i=0}^d(-1)^i \binom{d}{i} + (-1)^d \sum_{i=0}^d(-1)^i\sum_{J \in \binom{[d]}{i-1}} \gcd\left( n_j : j \notin J \right) \nonumber\\
&= \sum_{i=1}^d(-1)^{d+i}\sum_{J \in \binom{[d]}{i-1}} \gcd\left( n_j : j \notin J \right) = \sum_{i=1}^d(-1)^{i+1}\sum_{J \in \binom{[d]}{i}} \gcd\left( n_j : j \in J \right) \,.\label{eqn:num-ilps-lr-zonotope}
\end{align}
Now, we employ Gau\ss' identity $m = \sum_{\ell \mid m} \phi(\ell)$ for the totient function, where the sum runs over
all divisors~$\ell$ of a given positive integer~$m$.
In order to apply it for $m = \gcd(n_j : j \in J)$, for some $ J \subseteq [d]$, we use
the notation $\ell \mid n_J$ for the statement that $\ell \mid n_j$ for every $j \in J$.
Also, writing $\gcd(n_J) := \gcd(n_j : j \in J)$, we obtain 
\[
\gcd(n_J) = \sum_{\ell \mid \gcd(n_J)} \phi(\ell) = \sum_{\ell \mid n_J} \phi(\ell) \, .
\]
With~\eqref{eqn:num-ilps-lr-zonotope} this gives
\begin{align*}
\#(\inter(Z_{\bn}) \cap \ZZ^d) &= \sum_{i=1}^d(-1)^{i+1}\sum_{J \in \binom{[d]}{i}} \gcd\left( n_J \right) = \sum_{i=1}^d(-1)^{i+1}\sum_{J \in \binom{[d]}{i}} \sum_{\ell \mid n_J} \phi(\ell) \\
&= \sum_{i=1}^d(-1)^{i+1} \sum_{\ell = 1}^{\max(\bn)} \phi(\ell) \cdot \#\left\{J \in \binom{[d]}{i} : \ell \mid n_J \right\} \\
&= \sum_{\ell = 1}^{\max(\bn)} \phi(\ell) \, \underbrace{\sum_{i=1}^d(-1)^{i+1} \cdot
\#\left\{J \in \binom{[d]}{i} : \ell \mid n_J \right\}}_{=: s_{\ell}(\bn)}  \, .
\end{align*}
In order to proceed, we let $D_\ell(\bn) := \left\{ i \in [d] : \ell \mid n_i \right\}$.
Clearly, if none of the coordinates of~$\bn$ is divisible by~$\ell$, that is, $D_\ell(\bn) = \emptyset$, we get $s_\ell(\bn) = 0$.
If $D_\ell(\bn) \neq \emptyset$, then
\[
s_\ell(\bn) = \sum_{i=1}^{\# D_\ell(\bn)} (-1)^{i+1} \cdot \#\left\{J \in \binom{D_\ell(\bn)}{i} : \ell \mid n_J \right\} = \sum_{i=1}^{\# D_\ell(\bn)} (-1)^{i+1} \binom{\# D_\ell(\bn)}{i} = 1
\]
and we arrive at
\[
\#(\inter(Z_{\bn}) \cap \ZZ^d) = \sum_{\ell = 1}^{\max(\bn)} \phi(\ell) \, s_\ell(\bn)
= \sum_{\substack{\ell = 1\\ D_\ell(\bn) \neq \emptyset}}^{\max(\bn)} \phi(\ell) \,
s_\ell(\bn) = \sum_{\substack{\ell \in \NN \textrm{ such that}\\ \ell \mid n_j \textrm{
for some } j \in [d]}} \varphi(\ell) \, ,
\]
as desired.

For the total number of lattice points in~$Z_\bn$ we argue similarly and observe that in view of~\eqref{eqn:ehrhart-Z-n}
\begin{align*}
\#(Z_{\bn} \cap \ZZ^d) &= \sum_{i=0}^d g_i(Z_{\bn}) = 2^d + \sum_{i=0}^d \sum_{J \in \binom{[d]}{i-1}} \gcd\left( n_j : j \notin J \right) = 2^d + \sum_{i=1}^d \sum_{J \in \binom{[d]}{i}} \gcd(n_J) \\
&= 2^d + \sum_{i=1}^d \sum_{J \in \binom{[d]}{i}} \sum_{\ell \mid n_J} \varphi(\ell) = 2^d + \sum_{\ell=1}^{\max(\bn)} \varphi(\ell) \cdot \#\left\{ J \subseteq [d] : J \neq \emptyset, \ell \mid n_J \right\} \\
&= 2^d + \sum_{\substack{\ell \in \NN \textrm{ such that}\\
\ell \mid n_j \textrm{ for some } j \in [d]}} \varphi(\ell) \left(2^{\# J_\ell} - 1\right) \,,
\end{align*}
as claimed.
\end{proof}

Let's focus for a moment on the supposedly extremal instance $\lext{d} = (1,2,\ldots,d)$ for the lonely runner problem.
By the formula in Theorem~\ref{thm:num-ilps-lr-zonotopes}, we obtain $\#(\inter(Z_{\lext{d}}) \cap \ZZ^d) = \sum_{\ell=1}^d \varphi(\ell) =: \Phi(d)$ -- the \emph{totient summatory function}.
The first few values of $\Phi(d)$ are given in Table~\ref{tbl:Phi-d-values} and its asymptotic behavior is given by $\Phi(d) = \frac{3}{\pi^2} d^2 + \cO(d \log{d})$ (see, e.g.,~\cite[Section~18.5]{hardywright1979anintroduction}).
\begin{table}[ht]
\centering
\captionsetup{position=below}
\begin{tabular}{c|c|c|c|c|c|c|c|c|c|c|c|c|c|c|c}
$d$      & $1$           & $2$           & $3$           & $4$            & $5$            & $6$            & $7$            & $8$            & $9$            & $10$           & $11$           & $12$           & $13$ & $14$ & $15$           \\ \hline
$\Phi(d)$ & $1$ & $2$ & $4$ & $6$ & $10$ & $12$ & $18$ & $22$ & $28$ & $32$ & $42$ & $46$ & $58$ & $64$ & $72$
\end{tabular}
\caption{The first few values of the totient summatory function $\Phi(d)$.}
\label{tbl:Phi-d-values}
\end{table}

\begin{corollary}
\label{cor:bounds-nint-latpoints-lrc-zonotope}
Let $\bn \in \ZZ_{>0}^d$ be a velocity vector.
Then
\[
\max_{1 \leq i \leq d} n_i \leq \#(\inter(Z_{\bn}) \cap \ZZ^d) \leq \sum_{i=1}^d n_i.
\]
Moreover, if $\displaystyle\max_{1 \leq i \leq d} \,n_i \geq \Phi(d)$, then $\#(\inter(Z_{\bn}) \cap
\ZZ^d) \geq \#(\inter(Z_{\lext{d}}) \cap \ZZ^d)$, where $\lext{d} = (1,2,\ldots,d)$.
\end{corollary}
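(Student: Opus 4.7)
The plan is to deduce all three inequalities directly from the explicit formula
\[
\#(\inter(Z_{\bn}) \cap \ZZ^d) = \sum_{\substack{\ell \in \NN \textrm{ such that}\\ \ell \mid n_j \textrm{ for some } j \in [d]}} \varphi(\ell)
\]
in Theorem~\ref{thm:num-ilps-lr-zonotopes}, combined with the Gauss identity $\sum_{\ell \mid m} \varphi(\ell) = m$ that was already invoked in its proof.

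For the lower bound, I would pick an index $i^* \in [d]$ with $n_{i^*} = \max_{1 \le i \le d} n_i$. Every positive divisor $\ell$ of $n_{i^*}$ belongs to the index set of the sum above (take $j = i^*$), so the sum is bounded below by $\sum_{\ell \mid n_{i^*}} \varphi(\ell) = n_{i^*}$ by Gauss' identity. For the upper bound, the index set $\{\ell \in \NN : \ell \mid n_j \textrm{ for some } j \in [d]\}$ is the union $\bigcup_{j=1}^d \{\ell \in \NN : \ell \mid n_j\}$; replacing this union by the (possibly overcounted) sum of its terms and applying Gauss once for each $n_j$ yields $\#(\inter(Z_{\bn}) \cap \ZZ^d) \le \sum_{j=1}^d \sum_{\ell \mid n_j} \varphi(\ell) = \sum_{j=1}^d n_j$.

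For the concluding assertion, I would observe that for the velocity vector $\lext{d} = (1, 2, \ldots, d)$ an integer $\ell \in \NN$ divides some $n_j = j$ with $j \in [d]$ if and only if $\ell \in [d]$. The formula therefore specializes to $\#(\inter(Z_{\lext{d}}) \cap \ZZ^d) = \sum_{\ell=1}^d \varphi(\ell) = \Phi(d)$. Combined with the lower bound just established, the hypothesis $\max_{1 \le i \le d} n_i \geq \Phi(d)$ gives $\#(\inter(Z_{\bn}) \cap \ZZ^d) \geq \max_{1 \le i \le d} n_i \geq \Phi(d) = \#(\inter(Z_{\lext{d}}) \cap \ZZ^d)$, as desired. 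I do not foresee any substantial obstacle: all three parts reduce to bookkeeping with divisor sets and one appeal to Gauss' identity, with the heavy lifting already done inside Theorem~\ref{thm:num-ilps-lr-zonotopes}.
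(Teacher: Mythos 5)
Your proposal is correct and follows essentially the same route as the paper: both bounds come from the explicit divisor-sum formula of Theorem~\ref{thm:num-ilps-lr-zonotopes} together with Gau\ss' identity (restricting to the divisors of a single maximal $n_i$ for the lower bound, over-counting the union for the upper bound), and the final assertion chains the lower bound with the identity $\#(\inter(Z_{\lext{d}}) \cap \ZZ^d) = \Phi(d)$, exactly as in the text surrounding the corollary.
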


\begin{proof}
By Theorem~\ref{thm:num-ilps-lr-zonotopes}, we have $\#(\inter(Z_{\bn}) \cap \ZZ^d) = \sum_{\ell \mid n_j \textrm{ for some } j \in [d]} \varphi(\ell)$.
Thus, for every $i \in [d]$, Gau\ss' identity gives us $\#(\inter(Z_{\bn}) \cap \ZZ^d) \geq \sum_{\ell \mid n_i} \varphi(\ell) = n_i$.
Likewise, we obtain
\[
\#(\inter(Z_{\bn}) \cap \ZZ^d) \leq \sum_{i=1}^d \sum_{\ell \mid n_i} \varphi(\ell) =
\sum_{i=1}^d n_i \, . \qedhere
\]
\end{proof}

The corollary details the geometric heuristic for Tao's result described in the beginning of this section.
In fact, by virtue of the second part, if any of the  velocities~$n_i$ exceeds~$\Phi(d)$, then there are at least as many interior lattice points in~$Z_\bn$ as in the supposedly extremal instance~$Z_{\lext{d}}$.

\begin{remark}
\label{rem:other-extremal-lrc-instances}
The velocity vector $\lext{d}$ is not the only supposedly extremal case for the Lonely Runner Conjecture.
Goddyn \& Wong~\cite{goddynwong2006tight} identified numerous non-canonical extremal examples, for instance, $(1,3,4,7)$ and $(1,3,4,5,9)$, and also described how to obtain such an example, for any given~$d$, by carefully modifying~$\lext{d}$.
Regarding the number $\#(\inter(Z_{\bn}) \cap \ZZ^d)$ for any of those extremal instances~$\bn$, the canonical velocity vector~$\lext{d}$ always is the unique such instance with the minimal number of interior lattice points.
In this sense the count of interior lattice points of~$Z_\bn$ distinguishes between those extremal instances.
\end{remark}

For $2 \leq d \leq 5$, the $\Phi(d)$ interior lattice points in $Z_{\lext{d}}$ are given by the columns of the following matrices, respectively.
The coefficient of asymmetry of the respective column is written in the very last additional row:
\[
\left(\begin{matrix}
1 & 1 \\
1 & 2 \\ \hline
2 & 2
\end{matrix}\right),
\left(\begin{matrix}
1 & 1 & 1 & 1 \\
1 & 1 & 2 & 2 \\
1 & 2 & 2 & 3 \\ \hline
3 & 4 & 4 & 3
\end{matrix}\right),
\left(\begin{matrix}
1 & 1 & 1 & 1 & 1 & 1\\
1 & 1 & 1 & 2 & 2 & 2 \\
1 & 1 & 2 & 2 & 3 & 3 \\
1 & 2 & 2 & 3 & 3 & 4 \\ \hline
4 & 6 & 4 & 4 & 6 & 4
\end{matrix}\right),
\left(\begin{matrix}
1 & 1 & 1 & 1 & 1 & 1 & 1 & 1 & 1 & 1 \\
1 & 1 & 1 & 1 & 1 & 2 & 2 & 2 & 2 & 2 \\
1 & 1 & 1 & 2 & 2 & 2 & 2 & 3 & 3 & 3 \\
1 & 1 & 2 & 2 & 2 & 3 & 3 & 3 & 4 & 4 \\
1 & 2 & 2 & 2 & 3 & 3 & 4 & 4 & 4 & 5 \\ \hline
5 & 8 & 6 & 7 & 6 & 6 & 7 & 6 & 8 & 5
\end{matrix}\right).
\]
Since $\lext{d}$ is an extremal velocity vector, we need to have $\ca(Z_{\lext{d}},\bw) \geq d$, for every interior lattice point $\bw \in \inter(Z_{\lext{d}})$.
As the data in small dimensions suggest, this bound is in fact attained by $\bw = \bone \in \inter(Z_{\lext{d}})$ in any dimension~$d$.

\begin{proposition}
For every $d \in \NN$, we have $\ca(Z_{\lext{d}},\bone) = d$.
\end{proposition}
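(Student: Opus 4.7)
The plan is to leverage the central symmetry of $Z_{\lext{d}}$ about its center $\bc = \frac12(\bone + \lext{d})$. By Proposition~\ref{prop:ca-versus-minkowski-norm}, proving $\ca(Z_{\lext{d}}, \bone) = d$ reduces to establishing the Minkowski-norm identity $\|\bone - \bc\|_{Z_{\lext{d}} - \bc} = \frac{d-1}{d+1}$, since then $\frac{1+(d-1)/(d+1)}{1-(d-1)/(d+1)} = d$. I will therefore prove matching upper and lower bounds on this Minkowski norm.

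For the upper bound, I will exhibit an explicit Minkowski witness. Using $\bone - \bc = \frac12(\bone - \lext{d})$, a short calculation gives
\[
\bc + \frac{d+1}{d-1}(\bone - \bc) \; = \; \frac{d}{d-1}\bone - \frac{1}{d-1}\lext{d} \; = \; \sum_{i=1}^d \frac{d-i}{d-1}\,\be_i,
\]
whose coefficients on the generators $\be_1,\ldots,\be_d,\lext{d}$ of $Z_{\lext{d}}$ all lie in $[0,1]$ (the coefficient of $\lext{d}$ being $0$), certifying that the right-hand side is a point of $Z_{\lext{d}}$ and hence that $\bone - \bc \in \tfrac{d-1}{d+1}(Z_{\lext{d}} - \bc)$.

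For the matching lower bound, I will test against the linear functional $\phi(\bx) = x_d - x_1$. Writing an arbitrary point $\bx \in Z_{\lext{d}}$ as $\sum_{i=1}^d \alpha_i \be_i + \alpha_{d+1} \lext{d}$ with $\alpha_i \in [0,1]$, one has $\phi(\bx) = \alpha_d - \alpha_1 + (d-1)\alpha_{d+1}$, which ranges over the interval $[-1,d]$; central symmetry about $\bc$ then forces $\max_{\bx \in Z_{\lext{d}} - \bc}|\phi(\bx)| = \tfrac{d+1}{2}$. Since $\phi(\bone) = 0$ and $\phi(\bc) = \tfrac{d-1}{2}$, the value $|\phi(\bone - \bc)| = \tfrac{d-1}{2}$. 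The routine estimate $\|\by\|_K \geq |\phi(\by)|/\max_{\bx \in K}|\phi(\bx)|$, valid for any centrally symmetric body $K$, then yields $\|\bone - \bc\|_{Z_{\lext{d}} - \bc} \geq \tfrac{d-1}{d+1}$, completing the equality.

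The computation is ultimately routine once the direction is identified; the only mildly clever step is selecting the functional $\phi$. This selection is motivated by the observation that $\bone$ sits close to the vertex $\bzero$ along the principal diagonal of $Z_{\lext{d}}$ spanning $\be_1$ and $\be_d + \lext{d}$, which makes $\be_d - \be_1$ a natural candidate for the direction witnessing the asymmetry.
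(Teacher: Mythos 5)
Your proof is correct and follows essentially the same route as the paper: both reduce via Proposition~\ref{prop:ca-versus-minkowski-norm} to showing $\|\bone - \bc\|_{Z_{\lext{d}} - \bc} = \frac{d-1}{d+1}$ and both identify the same witness point $\bc + \frac{d+1}{d-1}(\bone - \bc) = \left(1, \frac{d-2}{d-1}, \ldots, \frac{1}{d-1}, 0\right)^\intercal$. The only difference is that where the paper asserts boundary membership of this point is ``clear,'' you certify the matching lower bound explicitly with the functional $x_d - x_1$, which is a welcome bit of extra rigor.
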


\begin{proof}
In view of Proposition~\ref{prop:ca-versus-minkowski-norm} it suffices to prove that $\|\bone - \bc\|_{Z_{\lext{d}} - \bc} = \frac{d-1}{d+1}$, which is equivalent to showing that the point $\bc + \frac{d+1}{d-1}(\bone - \bc)$ lies in the boundary of~$Z_{\lext{d}}$, where $\bc = \frac12(\bone + \lext{d})$ is the center of~$Z_{\lext{d}}$.
An elementary computation yields
\[
\bc + \frac{d+1}{d-1}(\bone - \bc) = \left( 1, \frac{d-2}{d-1},\ldots,\frac{1}{d-1},0 \right)^\intercal,
\]
from which the containment in the boundary of~$Z_{\lext{d}}$ is clear.
\end{proof}

\section{Outlook and Questions}
\label{sect:outlook}

Our bound in Theorem~\ref{thm:main-ca-precise-bound-zonotopes} on the coefficient of asymmetry concerns arbitrary lattice zonotopes.
For the Lonely Runner Conjecture these are too general, while from the viewpoint of the class of symmetric lattice polytopes they are too special.
As an outlook for future work, we discuss an open question in each of the two settings.

\subsection{Cubical lattice zonotopes}

A zonotope is called \emph{cubical} if each of its facets (and thus each of its proper faces) is a parallelepiped.
Equivalently, a $d$-dimensional zonotope is cubical if and only if every~$d$ of its
generators are linearly independent (see, e.g.,~\cite{shephard1974combinatorial}).

Given a velocity vector $\bn \in \ZZ^d_{>0}$, the lonely runner zonohedron $Z(\bn)$ projects along~$\RR \bn$ onto a cubical $(d-1)$-dimensional zonotope with~$d$ generators.
It is a lattice zonotope with respect to the projected lattice $\ZZ^d | \bn^\perp$, and by Proposition~\ref{prop:ca-linear-recession-cone}, we have $\ca(Z(\bn),\bw) = \ca(Z(\bn)|\bn^\perp,\bw|\bn^\perp)$, for every interior point $\bw \in \inter(Z(\bn))$.
In terms of~\eqref{eqn:ca-gap-relation} and Wills' bound $\gap(\bn) \geq \frac{1}{2d}$ \cite{willslonelyrunner}, we always find an interior lattice point $\bw \in \inter(Z(\bn))$ such that $\ca(Z(\bn),\bw) \leq 2d-1$.
Compared to the bound in Theorem~\ref{thm:main-ca-precise-bound-zonotopes} this is a \emph{linear} bound in the dimension~$d$.

The lower bound of Wills on $\gap(\bn)$ has been gradually improved over time, but even the best-known bound $\gap(\bn) \geq \frac{1}{2d} + \frac{c \log(d)}{d^2 (\log \log d)^2}$, for some $c > 0$, due to Tao~\cite{tao2018someremarks} does not reduce the factor~$2$ in~$\frac{1}{2d}$.
Thus, a positive answer to the following question would be great progress on the lonely runner problem:

\begin{question}
Is there a universal constant $\gamma < 2$ such that every cubical lattice zonotope $Z \subseteq \RR^d$ with $d+1$ generators contains an interior lattice point~$\bw$ with $\ca(Z,\bw) \leq \gamma \, d$ ?
\end{question}

\subsection{Arbitrary symmetric lattice polytopes}

The slightly superlinear upper bound in Theorem~\ref{thm:main-ca-precise-bound-zonotopes} for lattice zonotopes is in stark contrast to the necessarily double exponential upper bound in Theorem~\ref{thm:ca-general-lattice-polytopes} on arbitrary lattice polytopes.
One may wonder where the family of symmetric lattice polytopes is situated between these two extremes.
We think of symmetric lattice polytopes here as polytopes $P \subseteq \RR^d$ with $P - \bc = \bc - P$, for some not necessarily integral point $\bc \in \RR^d$.
As for lattice zonotopes, the center of~$P$ is necessarily half-integral, meaning $\bc \in \tfrac12\ZZ^d$.

\begin{question}
\label{qu:symmetric-lattice-polytopes}
What is the best possible constant~$\nu_d$ such that every symmetric lattice polytope $P \subseteq \RR^d$ with $\inter(P) \cap \ZZ^d \neq \emptyset$ contains an interior lattice point $\bw \in \inter(P)$ such that $\ca(P,\bw) \leq \nu_d$ ?
Can we at least determine the asymptotic behavior of~$\nu_d$?
\end{question}

Using Carath\'{e}odory's Theorem in the boundary of~$P$, one can argue similarly as in Theorem~\ref{thm:zonotopal-steinitz} and obtain a specialized version of Steinitz' result for symmetric lattice polytopes, in which the~$2d$ points span a lattice crosspolytope in~$P$.
This reduces the question above to the case of lattice crosspolytopes.
Just like Theorem~\ref{thm:ca-parallelepipeds} connects the lonely rabbit problem with lattice parallelepipeds that have interior lattice points, the constant~$\nu_d$ in Question~\ref{qu:symmetric-lattice-polytopes} connects to a Diophantine approximation problem behind crosspolytopes.
More precisely, we find that $\nu_d \leq \frac{1 + \kappa_d}{1 - \kappa_d}$, where
\[
\kappa_d := \sup_{\substack{\alpha \in \RR^d\\ \|\alpha\|_1 < 1}} \,\,\,\inf_{\bP \in \ZZ^d} \,\,\,\min_{\varepsilon \in \{-1,1\}^d} \| \alpha - (\varepsilon^\intercal \bP) \alpha - \bP \|_1 \,.
\]
Here, $\|\bv\|_1 = \sum_{i=1}^d |v_i|$ denotes the $\ell_1$-norm of~$\bv \in \RR^d$.
For $d \leq 2$, the class of lattice parallelepipeds agrees with the class of lattice
crosspolytopes, so it is only consistent that one finds $\kappa_1 = 2 \delta_2 = \frac13$
and $\kappa_2 = 2 \delta_2 = \frac35$ (see~Theorem~\ref{thm:ca-parallelepipeds}).
However, we do not know the exact value of any $\kappa_d$, with $d \geq 3$, nor do we know the asymptotic behavior of~$\kappa_d$ as a function of~$d$.



\bibliographystyle{amsplain}
\bibliography{mybib}

\end{document}